\newtheorem{theorem}{Theorem}[section]
\newtheorem{proposition}[theorem]{Proposition}
\newtheorem{corollary}[theorem]{Corollary}
\newtheorem{lemma}[theorem]{Lemma}
\newtheorem{remark}[theorem]{Remark}
\numberwithin{subsection}{section}
\numberwithin{subsubsection}{subsection}
\numberwithin{theorem}{section}
\numberwithin{equation}{section} 
\newcommand{\eps}{\varepsilon}
\newcommand\bbA{\mathbb{A}}
\newcommand\bbE{\mathbb{E}}
\newcommand\bbN{\mathbb{N}}
\newcommand\bbR{\mathbb{R}}
\newcommand\bbX{\mathbb{X}}
\newcommand\bbZ{\mathbb{Z}}
\newcommand{\mcF}{\mathcal{F}}
\newcommand{\bfW}{\mathbf{W}}
\newcommand{\bfX}{\mathbf{X}}
\DeclareMathOperator{\tr}{tr}
\newcommand{\dif}{{\mathrm d}}
\newcommand{\Ind}{{\bf 1}}
\newcommand{\E}{{\mathbf e}}
\newcommand{\I}{{\mathrm i}}
\DeclareMathOperator{\diag}{diag}
\title[Homogenisation for anisotropic kinetic random motions]{Homogenisation for anisotropic   \\
kinetic random motions}
\author[P. Perruchaud]{Pierre Perruchaud}
\address{Univ Rennes, CNRS, IRMAR - UMR 6625, F-35000 Rennes, France}
\email{pierre.perruchaud@univ-rennes1.fr}
\begin{document}

\begin{abstract}
We introduce a class of kinetic and anisotropic random motions $(x_t^{\sigma},v_t^{\sigma})_{t \geq 0}$ on the unit tangent bundle $T^1 \mathcal M$ of a general Riemannian manifold $(\mathcal M,g)$, where $\sigma$ is a positive parameter quantifying the amount of noise affecting the dynamics. As the latter goes to infinity, we then show that the time rescaled process $(x_{\sigma^2 t}^{\sigma})_{t \geq 0}$ converges in law to an explicit anisotropic Brownian motion on $\mathcal M$. Our approach is essentially based on the strong mixing properties of the underlying velocity process and on rough paths techniques, allowing us to reduce the general case to its Euclidean analogue. Using these methods, we are able to recover a range of classical results.
\end{abstract}

\maketitle

\section{Introduction}
\label{SectionIntroduction}

We consider a class of anisotropic and kinetic random motions on the unit tangent space of a general Riemannian manifold $(\mathcal M,g)$ of dimension $d \geq 2$. In the simplest case when the base manifold is the Euclidean space $\mathbb R^d$, the typical process we have in mind can be described as follows: let $\sigma>0$ be a positive parameter and let $(B_t)_{t \geq 0}$ be a Brownian motion in $\mathbb R^d$ with (non identity) covariance matrix $\Sigma=A^*A$. We construct an anisotropic diffusion process $(v_t)_{t \geq 0}=(v_t^{\sigma})_{t \geq 0}$ on the Euclidean sphere $\mathbb S^{d-1} \subset \mathbb R^d$ by solving the Stratonovich differential equation 
\begin{equation}\label{eq.defv}
\dif v_t = \displaystyle{\sigma \Pi_{v^{\perp}_t} \circ \dif B_t },
\end{equation}
where $\Pi_{v^{\perp}_t}$ denotes the projection on the orthogonal of $v_t$. We then integrate the velocity process $(v_t)_{t \geq 0}$ to obtain a process $(x_t)_{t \geq 0}=(x_t^{\sigma})_{t \geq 0}$ with values in $\mathbb R^d$
\begin{equation}\label{eq.defx}
x_t := x_0 +\int_0^t v_s \dif s.
\end{equation}
The process $(x_t,v_t)_{t \geq 0}$ is thus a diffusion process with values in the unit tangent space $T^1 \mathbb R^d=\mathbb R^d \times \mathbb S^{d-1}$. The first projection $(x_t)_{t \geq 0}$ is a $\mathcal C^1$ curve in $\mathbb R^d$, which inherits the anisotropy of the velocity process $(v_t)_{t \geq 0}$, and the positive parameter $\sigma$ allows one to slow or speed up the clock of the latter.
The next figure shows an approximation of a sample path of the resulting process.
\begin{figure}[ht]
\includegraphics[scale=0.3]{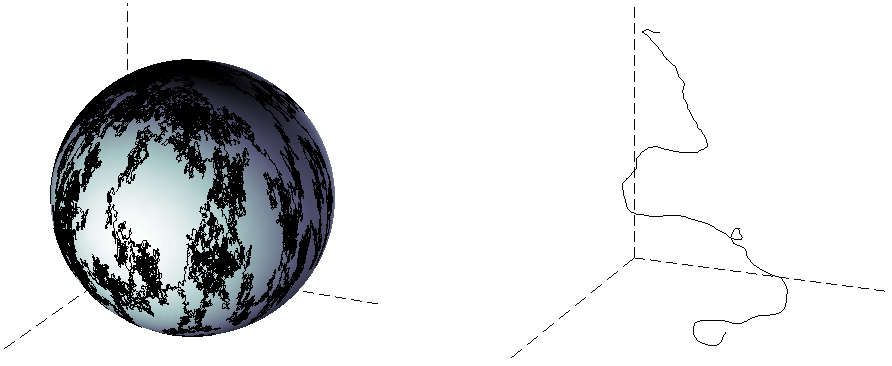}
\caption{A sample path of the velocity process $(v_t)_{0 \leq t \leq 10}$ on $\mathbb S^2$ (left) and the corresponding  $(x_t)_{0 \leq t \leq 10}$ in $\mathbb R^3$ (right) for the choice of covariance matrix $\Sigma=\diag(1,1.1,1.2)$.}
\end{figure}
\par
\medskip
On a general Riemannian manifold $(\mathcal M,g)$, an analogue process $(x_t,v_t)_{t \geq 0}$ with values in the unit tangent bundle $T^1 \mathcal M$ can be constructed starting from the above Euclidean process and using the classical stochastic development/parallel transport machinery. Namely, the process $(x_t,v_t)_{t \geq 0}$ in $T^1 \mathcal M$ is characterised by the fact that the image of $v_t \in T_{x_t}^1 \mathcal M$ in the fixed unit tangent space $T_{x_0}^1\mathcal M\simeq \mathbb S^{d-1}$ by the inverse stochastic parallel transport along $(x_s)_{0\leq s\leq t}$ solves equation \eqref{eq.defv} above. 

The isotropic analogue of the process, i.e. the process associated with $\Sigma=\mathrm{Id}$, was introduced in \cite{ABT} under the name kinetic Brownian motion, where its was shown that as the parameter $\sigma$ goes from zero to infinity, then the sample paths of the process $(x_{\sigma^2 t})_{t \geq 0}$ interpolates in a precise sense between geodesics and Brownian paths on the based manifold $\mathcal M$. For a fixed intensity parameter $\sigma$, the Poisson boundary of the process was also fully determined if the base manifold is rotationally invariant.
\par
\medskip

The motivation to introduce anisotropy in this context is twofold. From an applied point of view, the kinetic Brownian motion is a simple, yet very reasonable model for the dynamics of a mesoscopic spherical particle with bounded velocity in an isotropic heat bath. Compared to the standard Langevin dynamics where the velocities are Gaussian, the fact that the velocities are here of unit norm is perfectly consistent with special relativity theory. The homogenisation phenomena shown in \cite{ABT} illustrates the fact that the scaling limit of the process, i.e. the macroscopic behaviour of the particle is nevertheless diffusive, as awaited. Now, if the geometry of the mesoscopic particle under consideration is not spherical, or if the heat bath is anisotropic, the dynamics of the velocity process has to be anisotropic, see e.g. \cite{BHR, CP03, vKampen} and the references therein. In that context, the velocity evolution given by the stochastic differential equation \eqref{eq.defv} with $\Sigma \neq \mathrm{Id}$ is very natural. As we will see below and with this applied point of view, the main result of this article guarantees that the macroscopic behaviour of the particle is still diffusive, with an explicit anisotropy matrix.

From a more theoretical point of view,  the introduction of anisotropy is also unavoidable if one wants to generalise the results of \cite{ABT} to an infinite dimensional setting, say to an infinite dimensional Hilbert space. Indeed, doing so, one quickly faces the problem of defining spherical Brownian motion in this context. Looking at equation \eqref{eq.defv}, the orthogonal projection makes perfect sense in a Hilbert setting but we have to give meaning to the driving Brownian motion $B$. This can naturally be done using the notion of abstract Wiener space, see e.g. \cite{Gross, Gross2} or \cite[Chapter 8]{Stroock}. Roughly speaking, in that framework the driving process in \eqref{eq.defv} has to belong to the image of a radonifying injection, hence introducing a Hilbert-Schmidt covariance operator. In a finite dimensional setting, the action of this Hilbert-Schmidt operator amounts to replacing the standard Brownian motion $B$ by a Brownian motion with covariance $\Sigma \neq \mathrm{Id}$, i.e. to replace the isotropic noise driving kinetic Brownian motion by an anisotropic noise; this justifies our choice of dynamics for the velocity process.
\par
\medskip
Our goal in this paper is to exhibit the asymptotics of the time rescaled process $(x_{\sigma^2t}^{\sigma},v_{\sigma^2t}^{\sigma})_{t \geq 0}$ as the intensity parameter $\sigma$ goes to infinity. More precisely, we show that in both Euclidean and Riemannian contexts, its first projection converges to an anisotropic Brownian motion.
The presence of anisotropy drastically complexifies the approach and computations compared to the isotropic framework. Namely, in the isotropic Euclidean setting considered in Section 2.2 of \cite{ABT} and which is the core of the proof when associated with rough paths techniques, the homogenisation of kinetic Brownian motion was proved using It\^o calculus and standard martingale techniques. As it will be clear in Section 2 below, the Doob--Meyer decomposition of the velocity process given by equation \eqref{eq.defv} gets more involved here, its invariant measure is not likely to be easy to describe, and martingale techniques need explicit solutions of the Poisson equation which seems hopeless in this context. In fact, guessing a formula for the invariant measure of the $v^\sigma$ on the sphere before reading the statement of Proposition \ref{prop.mixing} does not seem obvious.
\par
\medskip
For this reason, we adopt a different approach and point of view here.  Our proof of  homogenisation for the time rescaled version of the process $(x_t^{\sigma},v_t^{\sigma})_{t \geq 0}$ is indeed essentially based on quantitative mixing properties of the velocity process. We show in particular that

\begin{proposition}[Lemma \ref{lem.mesinv} and Proposition \ref{prop.exponentialdec} below]\label{prop.mixing}
The process $v^\sigma_t$ solution of \eqref{eq.defv} is ergodic in $\mathbb S^{d-1}$ with an explicit invariant measure $\mu$ whose density with respect to the uniform measure $\dif\theta$ on the sphere is given by
\[
\frac{\dif\mu}{\dif\theta}(\theta)=\frac{\|A^{-1} \theta \|^{1-d}}{\int_{\,\mathbb S^{d-1}} \|A^{-1} \theta \|^{1-d} \dif\theta}.
\]
In particular, the invariant measure $\mu$ are well as the trajectories are invariant under all the coordinate reflections 
\begin{equation}\label{eq.symm}
(\theta^1, \cdots, \theta^i, \cdots, \theta^d) \mapsto (\theta^1, \cdots, -\theta^i, \cdots, \theta^d), \quad1 \leq i \leq d.
\end{equation}
Moreover, there exists a positive constant $\tau$ such that, if  $\mcF_{[a,b]}$ denotes the $\sigma$-algebra generated by the unit speed ($\sigma=1$) velocity process $v_t$, for $a\leq t<b$, then for any $0\leq s<t$ and any bounded measurable real-valued random variables $P$ and $F$ that are $\mcF_{[0,s]}$ and $\mcF_{[t,\infty]}-$measurable, respectively, we have
\begin{equation}\label{eq.mix}
\big|\bbE_\mu[PF] - \bbE_\mu[P]\,\bbE_\mu[F]\big| \lesssim |P|_\infty |G|_\infty\,\E^{-(t-s)/\tau}.
\end{equation}
\end{proposition}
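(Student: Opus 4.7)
My plan is to establish the three assertions—explicit invariant density, reflection symmetries, and exponential mixing—in sequence, absorbing the anisotropy via a well-chosen change of variable on the sphere, and then reducing the mixing part to the standard ergodic theory of uniformly elliptic diffusions on compact manifolds.

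\emph{Invariant measure.} The cleanest route is to introduce the sphere diffeomorphism $\psi(v) := A^{-1}v/\|A^{-1}v\|$ and to set $u_t = \psi(v_t)$. Applying the Stratonovich chain rule together with the algebraic identity $\Pi_{u^\perp} A^{-1}\Pi_{v^\perp} A = \Pi_{u^\perp}$, which holds because $\ker(\Pi_{v^\perp}A) = \bbR u$, yields the clean equation $\dif u_t = \|A u_t\|\,\Pi_{u_t^\perp}\circ\dif W_t$. A Stratonovich-to-It\^o conversion then shows that its generator, acting on homogeneous-degree-zero functions, reads $L_u g = \tfrac12\|Au\|^2\Delta_{\mathbb{S}^{d-1}}g + \tfrac12\langle\Sigma u,\nabla g\rangle$, and the drift is a perfect gradient (of $\tfrac12\ln\|Au\|^2$) against the diffusion tensor. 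The detailed-balance condition thus forces $\nabla\ln\rho_u = -\nabla\ln\|Au\|$, giving $\rho_u\propto\|Au\|^{-1}$. Pushing this measure forward through $\psi^{-1}:u\mapsto Au/\|Au\|$, whose spherical Jacobian is $|\det A|/\|Au\|^d$, and using $\|A u\|=1/\|A^{-1}v\|$ at $u=\psi(v)$, one arrives at $\dif\mu/\dif\theta\propto\|A^{-1}\theta\|^{1-d}$.

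\emph{Reflection symmetries.} In an orthonormal basis diagonalizing $\Sigma$, the quadratic form $\|A^{-1}v\|^2 = v^\top\Sigma^{-1}v$ is a sum of squares $\sum_i(v^i)^2/\lambda_i$, manifestly invariant under each reflection $R_i:\theta^i\mapsto-\theta^i$. For the trajectories themselves, the intertwining $R_i\Pi_{v^\perp} = \Pi_{(R_iv)^\perp}R_i$ ensures that $R_iv$ satisfies the same SDE driven by $R_iB$, and $R_i\Sigma R_i = \Sigma$ (from diagonality in this basis) gives $R_iB\stackrel{\text{law}}{=}B$.

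\emph{Exponential mixing and main obstacle.} The Markov property yields the usual reduction: writing $F\in\mcF_{[t,\infty]}$ as a functional of the shifted trajectory and setting $g(v):=\bbE_v F$, one has $\bbE_\mu[F\mid\mcF_{[0,s]}] = (P_{t-s}g)(v_s)$, and hence
\[
\bigl|\bbE_\mu[PF]-\bbE_\mu[P]\,\bbE_\mu[F]\bigr| \le |P|_\infty\,\bigl\|P_{t-s}g-\mu(g)\bigr\|_\infty.
\]
The claim then reduces to a uniform $L^\infty$ contraction of the semigroup at geometric rate. The generator $L_v$ is uniformly elliptic on the compact sphere (the diffusion tensor $\Pi_{v^\perp}\Sigma\Pi_{v^\perp}$ is positive-definite on each tangent space uniformly in $v$ since $\Sigma\succ 0$), so classical parabolic regularity provides a uniform lower bound $p_{t_0}(x,y)\ge\eta>0$ on the transition density for some fixed $t_0>0$, and Doeblin's coupling argument then produces geometric contraction in total variation. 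The main obstacle is really the invariant-measure step: guessing the change of variable $\psi$, verifying the algebraic identity, computing the It\^o generator of $u$, and recognizing that its drift is precisely balanced by the diffusion to yield a reversible measure; the direct verification of $L^*\rho=0$ in spherical coordinates would be markedly more opaque.
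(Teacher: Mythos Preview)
Your argument is correct in all three parts. The mixing step is essentially identical to the paper's: uniform ellipticity of $L_v$ on the compact sphere gives a Doeblin minorisation, hence geometric total variation decay, and the Markov reduction $\bbE_\mu[F\mid\mcF_{[0,s]}]=(P_{t-s}g)(v_s)$ is exactly what is used there. The reflection symmetries are also handled the same way.

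Where you genuinely diverge is the invariant measure. The paper does \emph{not} stay on the sphere: it introduces a Euclidean lift $(u_t)_{t\ge0}$ in $\bbR^d$ whose radial projection is $v_t$, then performs the \emph{linear} change of variable $y_t=A^{-1}u_t$; the generator of $y$ becomes a time change of a gradient diffusion with potential $V_A(y)=-\log\|Ay\|+\tfrac12\|y\|^2$, and the density on the sphere is read off by projecting the explicit Gaussian-type invariant law of $y$. Your route is the purely spherical shadow of this: your diffeomorphism $\psi(v)=A^{-1}v/\|A^{-1}v\|$ is precisely the angular part of the paper's linear map $A^{-1}$, and your transformed process $u_t=\psi(v_t)$ is the radial projection of the paper's $y_t$. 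What you gain is that you never leave $\mathbb S^{d-1}$ and the detailed-balance computation is short once the algebraic identity $\Pi_{u^\perp}A^{-1}\Pi_{v^\perp}A=\Pi_{u^\perp}$ and the generator $L_u=\tfrac12\|Au\|^2\Delta_{\mathbb S^{d-1}}+\tfrac12\langle\Sigma u,\nabla\rangle$ are in hand; the paper's gain is that on $\bbR^d$ the change of variable is linear and the potential $V_A$ is read off immediately, at the cost of an extra projection step at the end. Both approaches require a change of variable and a pushforward Jacobian computation; yours packages these more intrinsically, the paper's more elementarily.
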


The above strong mixing and symmetry properties of the velocity process are the key ingredients to establish the homogenisation of the anisotropic version of kinetic Brownian motion in the Euclidean setting. Indeed, we have the following result. 

\begin{theorem}[Proposition \ref{prop.convergencewk} and Theorem \ref{thm.convergenceRP} below]\label{thm.euclidean}
Let $(x_t^{\sigma},v_t^{\sigma})_{t \geq 0}$ with values in $T^1\mathbb R^d$ be the solution of equation \eqref{eq.defv} and \eqref{eq.defx}, starting from $(x_0,v_0)$ where $x_0$ is fixed and $v_0$ chosen at random according to $\mu$. Then as $\sigma$ goes to infinity, the time rescaled process $(x_{\sigma^2 t}^{\sigma})_{t\in[0,1]}$ converges in law to a Brownian motion in the Euclidean space $\mathbb R^d$, with covariance matrix $\diag(\gamma_1, \cdots, \gamma_d)$ where 
\[
\gamma_i:=2 \int_0^{+\infty} \bbE_\mu[v_0^i v_t^i] \dif t, \quad 1 \leq i \leq d.
\]
\end{theorem}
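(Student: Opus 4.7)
The plan is to combine the scaling invariance of \eqref{eq.defv} with a central limit theorem for additive functionals of the stationary exponentially mixing process $v$, using the decorrelation estimate \eqref{eq.mix} and the coordinate-reflection symmetry from Proposition \ref{prop.mixing}. All the geometric and anisotropic content of the problem is then packaged into the velocity process, and the homogenisation becomes a classical CLT for a continuous-time stationary bounded process.

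First I would reduce to the unit-speed case: a direct inspection of \eqref{eq.defv} shows that $(v_{s/\sigma^2}^\sigma)_{s\geq 0}$ has the law of the unit-speed velocity process $(v_s)_{s\geq 0}$, so that after the change of variable $r=\sigma^2 s$
\[
x_{\sigma^2 t}^\sigma - x_0 \;\overset{d}{=}\; \frac{1}{\sigma^2}\int_0^{\sigma^4 t} v_r\, \dif r.
\]
The problem reduces to the convergence in law, in $C([0,1],\bbR^d)$, of $T^{-1/2}\int_0^{Tt}v_r\,\dif r$ as $T=\sigma^4\to\infty$. The reflection symmetries of $\mu$ imply $\bbE_\mu[v_r^i]=0$ and $\bbE_\mu[v_s^i v_r^j]=0$ for $i\neq j$, so only the diagonal correlations survive. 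Combined with stationarity and the exponential decay \eqref{eq.mix}, this gives
\[
\lim_{T\to\infty} T^{-1}\int_0^T\!\int_0^T \bbE_\mu[v_s^i v_r^i]\,\dif s\,\dif r \;=\; 2\int_0^\infty \bbE_\mu[v_0^i v_t^i]\,\dif t \;=\; \gamma_i,
\]
identifying the prospective asymptotic covariance as $\diag(\gamma_1,\ldots,\gamma_d)$.

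Finite-dimensional convergence would then follow from a Bernstein blocking argument: partition $[0,Tt]$ into large blocks of length $L\gg\tau$ separated by small gaps, so that by \eqref{eq.mix} the block integrals are nearly independent, each has variance of order $L$, and a Lindeberg check yields asymptotic normality. Joint convergence of the vector of increments along any finite partition of $[0,1]$ uses the same mechanism, since non-overlapping time integrals decorrelate exponentially. For tightness in $C([0,1],\bbR^d)$ I would apply Kolmogorov's criterion to the fourth moment: expanding $\bbE_\mu\big[\big|\int_s^t v_r\,\dif r\big|^4\big]$ into a quadruple integral over $\{s\leq r_1\leq r_2\leq r_3\leq r_4\leq t\}$ and conditioning at an intermediate time via the Markov property, one bounds the four-point function by two applications of \eqref{eq.mix}; this yields a bound of order $|t-s|^2$ uniformly in $\sigma$ after the scaling reduction.

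The main obstacle is precisely this uniform fourth-moment estimate: \eqref{eq.mix} is a two-point statement, and to extract a fourth-moment bound of the correct order one has to iterate it through the Markov property and carefully track how the exponential decorrelation breaks the quadruple time integral into integrable pieces. The scaling identity and the symmetry-driven cancellation of off-diagonal correlations are essentially automatic, but producing the tightness estimate uniformly in $\sigma$, so that the finite-dimensional CLT can be upgraded to process convergence and matched with the stated diagonal covariance $\diag(\gamma_i)$, is where the bulk of the work sits. Once tightness and fidis are both in hand, the standard compactness-plus-identification pattern closes the argument.
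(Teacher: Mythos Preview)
Your plan is sound and shares the same backbone as the paper: scaling reduction to the unit-speed process, reflection symmetry to kill off-diagonal covariances, and an iterated use of the mixing estimate \eqref{eq.mix} to control higher moments. The paper's Lemma \ref{lem.readytouse} is exactly the combinatorial fact you flag as the crux, namely $\int_{0\leq t_1\leq\cdots\leq t_{2n}\leq T}\big|\bbE_\mu[v_{t_1}^{i_1}\cdots v_{t_{2n}}^{i_{2n}}]\big|\,\dif t \lesssim_n T^n$, proved by splitting the product at the maximal adjacent gap and applying \eqref{eq.mix} twice (Proposition \ref{prop.mixingcoords}). The organisational difference is that where you propose finite-dimensional convergence via Bernstein blocking and a Lindeberg check, followed by a fourth-moment Kolmogorov bound, the paper reverses the order: it first proves tightness in $\mathcal C^\gamma$ for every $\gamma<1/2$ using moments of all orders, then shows that any subsequential limit has stationary independent increments by applying \eqref{eq.mix} directly to products of bounded continuous functionals $F_i(X^\sigma_{t_i-\eps}-X^\sigma_{s_i})$ of disjoint increments and sending $\eps\to 0$. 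This bypasses the block construction entirely and identifies the Brownian limit in one step, at the cost of needing tightness upstream rather than downstream. Your route is the textbook CLT-for-stationary-processes argument and works fine for convergence in $C([0,1],\bbR^d)$; the paper's tightness-first route is set up so as to upgrade immediately to rough-path convergence (Theorem \ref{thm.convergenceRP}), which is what is actually used for the manifold result.
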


Our strategy of proof consists in establishing that the rough path lift of $(x_{\sigma^2 t}^{\sigma})_{t \geq 0}$ converges to the Stratonovich rough path lift of a Brownian motion with the above covariance. To do so, we use again the strong mixing properties of the velocity process, associated with a  Lamperti-type criterion to ensure the tightness of the lift in rough path topology ---  see Lemmas \ref{lem.kolmogorovwk} and \ref{lem.kolmogorovRP} below. We then identify the limit process by showing that it has to be a stationary process with independent Gaussian increments on the nilpotent group associated with the rough path structure, see Theorem \ref{thm.convergenceRP}. 

Using the fact that the notion of stochastic development amounts to solving a stochastic differential equation and that the It\^o map is continuous with respect to the rough paths topology, one can conclude that the previous Euclidean statement actually holds on a general Riemannian manifold. Anisotropic Brownian motion on $\mathcal M$ is defined as the stochastic development of an anisotropic Brownian motion in $T_{x_0}\mathcal M$.

\begin{theorem}\label{thm.manifold}
Let $(\mathcal M,g)$ be a complete and stochastically complete Riemannian manifold and let $(x_t^{\sigma},v_t^{\sigma})_{t \geq 0}$ be the process with values in $T^1 \mathcal M$ characterised by the fact that the image of $v_t \in T_{x_t}^1 \mathcal M$ in the fixed unit tangent space $T_{x_0}^1\mathcal M\simeq \mathbb S^{d-1}$ by the inverse stochastic parallel transport along $(x_s)_{0\leq s\leq t}$ solves equation \eqref{eq.defv} in $T_{x_0}^1 \mathcal M$. Then as $\sigma$ goes to infinity, the time rescaled process $(x_{\sigma^2 t}^{\sigma})_{t\in[0,1]}$ converges in law to an anisotropic Brownian motion on the base manifold $\mathcal M$.
\end{theorem}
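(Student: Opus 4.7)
The plan is to transfer the Euclidean homogenisation result (Theorem \ref{thm.euclidean}) to the manifold setting by working in the orthonormal frame bundle and invoking continuity of the It\^o--Lyons solution map. Fix an initial frame $u_0 \in O(\mathcal M)$ over $x_0$, which we identify with the canonical basis of $\mathbb R^d \simeq T_{x_0}\mathcal M$, and let $H_1, \ldots, H_d$ denote the canonical horizontal vector fields on $O(\mathcal M)$. The defining property of $(x_t^{\sigma}, v_t^{\sigma})$ can be rephrased by saying that the horizontal lift $U_t^{\sigma} \in O(\mathcal M)$ of $x_t^{\sigma}$ with $U_0^{\sigma} = u_0$ satisfies $\dot U_t^{\sigma} = \sum_i \bar v_t^{\sigma,i}\, H_i(U_t^{\sigma})$, where $\bar v_t^{\sigma} \in \mathbb S^{d-1}$ is the Euclidean velocity process solving \eqref{eq.defv}. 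Equivalently, $U_t^{\sigma}$ solves the controlled ordinary differential equation $\dif U_t^{\sigma} = H(U_t^{\sigma})\, \dif \bar x_t^{\sigma}$ driven by the smooth path $\bar x_t^{\sigma} := \int_0^t \bar v_s^{\sigma}\, \dif s$. Under the scaling of interest, the time-rescaled horizontal lift $\tilde U_t^{\sigma} := U_{\sigma^2 t}^{\sigma}$ is driven analogously by $\tilde X_t^{\sigma} := \bar x_{\sigma^2 t}^{\sigma}$, and one has the pointwise identity $x_{\sigma^2 t}^{\sigma} = \pi(\tilde U_t^{\sigma})$ where $\pi : O(\mathcal M) \to \mathcal M$ is the bundle projection.

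By Theorem \ref{thm.convergenceRP}, the canonical rough path lift of $\tilde X^{\sigma}$ converges in law to the Stratonovich rough path lift $\mathbf W$ of the anisotropic Brownian motion $W$ on $\mathbb R^d$ with covariance matrix $\diag(\gamma_1, \ldots, \gamma_d)$. Lyons' universal limit theorem asserts that the solution map of the rough differential equation $\dif U = H(U)\, \dif \mathbf X$ is continuous in the driver $\mathbf X$ with respect to the rough path topology, so the continuous mapping theorem yields the weak convergence $\tilde U^{\sigma} \Rightarrow \tilde U$, where $\tilde U$ solves $\dif \tilde U = H(\tilde U)\, \dif \mathbf W$. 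This rough differential equation is precisely the Eells--Elworthy--Malliavin construction of the horizontal lift of an anisotropic Brownian motion on $\mathcal M$, so projecting via $\pi$ delivers the desired convergence of $(x_{\sigma^2 t}^{\sigma})_{t \in [0,1]}$ in law to an anisotropic Brownian motion on $\mathcal M$.

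The main obstacle I anticipate is that the horizontal vector fields $H_i$ are generally not $C_b^{\gamma}$ on $O(\mathcal M)$, so Lyons' continuity theorem does not apply off the shelf. I would handle this via a standard localisation: introduce an exhaustion of $O(\mathcal M)$ by relatively compact open sets $(K_n)_{n \geq 1}$ and the corresponding exit times $\tau_n^{\sigma}$ and $\tau_n$ of $\tilde U^{\sigma}$ and $\tilde U$. After a smooth truncation of the vector fields outside each $K_n$, the It\^o--Lyons map applies and yields convergence of the stopped processes. The hypothesis of geodesic completeness guarantees that horizontal lifts of smooth paths exist for all time (so the prelimiting $\tilde U^{\sigma}$ do not explode), while stochastic completeness of $\mathcal M$ prevents the limiting process $\pi(\tilde U)$ from reaching infinity in finite time; together these give $\tau_n \nearrow \infty$ almost surely, and letting $n \to \infty$ removes the localisation and concludes the proof.
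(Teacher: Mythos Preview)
Your proposal is correct and follows essentially the same route as the paper: lift to the orthonormal frame bundle, view the rescaled process as the solution of a controlled/rough differential equation driven by the Euclidean kinetic path, invoke Theorem~\ref{thm.convergenceRP} for the rough path convergence of the driver, and conclude via continuity of the It\^o--Lyons map together with a localisation that is removed using geodesic and stochastic completeness. The only point the paper makes explicit that you leave implicit is that the hypothesis of stochastic completeness is stated for the isotropic Brownian motion on $\mathcal M$, and one must check (or cite, as the paper does via \cite{ABT}) that this entails non-explosion of the \emph{anisotropic} Brownian motion appearing as the limit.
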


As it will be clear from the proof of Theorem \ref{thm.euclidean}, the homogenisation phenomenon holds as soon as the mixing properties of the velocity process and the symmetry of the trajectories described in Proposition \ref{prop.mixing} hold. In other words, the conclusion of Theorem \ref{thm.manifold} is valid as soon as the process $(x_t^{\sigma},v_t^{\sigma})_{t \geq 0}$ we consider is the stochastic development of a velocity process satisfying the conclusions of Proposition \ref{prop.mixing}. In particular, our proof actually applies even if $(v_t)_{t \geq 0}=(v_t^{\sigma})_{t \geq 0}$ is an ergodic Markov process with jumps on $\mathbb S^{d-1}$ as soon as the conditions \eqref{eq.symm}  and \eqref{eq.mix} are fulfilled.

\begin{theorem}\label{thm.extension}
Let $(\mathcal M,g)$ be a complete and stochastically complete Riemannian manifold and let $(x_t^{\sigma},v_t^{\sigma})_{t \geq 0}$ be the process with values in $T\mathcal M$ characterised by the fact that the image of $v_t \in T_{x_t}\mathcal M$ in the fixed tangent space $T_{x_0}\mathcal M\simeq \bbR^d$ by the inverse stochastic parallel transport along $(x_s)_{0\leq s\leq t}$ satisfies the conditions \eqref{eq.symm}  and \eqref{eq.mix}. Then as $\sigma$ goes to infinity, the time rescaled process $(x_{\sigma^2 t}^{\sigma})_{t\in[0,1]}$ converges in law to an anisotropic Brownian motion on the base manifold $\mathcal M$.
\end{theorem}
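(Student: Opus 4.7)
The plan is to follow the strategy of Theorem \ref{thm.manifold} and reduce the claim to its Euclidean analogue Theorem \ref{thm.euclidean}, after auditing the proof of the latter to confirm that it uses Proposition \ref{prop.mixing} as a black box --- namely only through the reflection symmetry \eqref{eq.symm} and the exponential mixing bound \eqref{eq.mix} --- rather than the specific SDE form \eqref{eq.defv} of the velocity.

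First I would revisit the proof of Theorem \ref{thm.euclidean} to check that both its main ingredients depend on \eqref{eq.symm}--\eqref{eq.mix} only. The tightness of the rough path lift of $\bar x^\sigma_{\sigma^2 t} := \int_0^{\sigma^2 t} \bar v^\sigma_s\, \dif s$ follows from a Kolmogorov-type criterion (Lemmas \ref{lem.kolmogorovwk} and \ref{lem.kolmogorovRP}) on moments of increments at levels one and two, which for a bounded stationary process reduces via a standard covariance computation to the exponential decay \eqref{eq.mix}. The identification of the limit as a stationary Gaussian-increments process on the step-$2$ nilpotent group uses again only mixing and stationarity, while the diagonal form of the limiting covariance $\diag(\gamma_1, \ldots, \gamma_d)$ with $\gamma_i := 2\int_0^\infty \bbE_\mu[\bar v^i_0 \bar v^i_t]\, \dif t$ follows solely from the coordinate reflections \eqref{eq.symm}, which force $\bbE_\mu[\bar v^i_0 \bar v^j_t]=0$ for $i\neq j$. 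Thus the Euclidean convergence persists for any velocity process fulfilling the hypotheses of Theorem \ref{thm.extension}.

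Second, I would run the manifold-to-Euclidean reduction exactly as in Theorem \ref{thm.manifold}. Writing $U_t$ for the stochastic parallel transport along $(x^\sigma_s)_{s \le t}$ and $\bar v^\sigma_t := U_t^{-1} v^\sigma_t \in T_{x_0}\mathcal M \simeq \bbR^d$, the process $x^\sigma$ is the stochastic development of $\bar x^\sigma := \int_0^\cdot \bar v^\sigma_s\, \dif s$: the pair $(x^\sigma_t, U_t)$ solves the canonical horizontal lift equation on the orthonormal frame bundle $\mathcal O\mathcal M$ driven by $\dif \bar x^\sigma$, and $x^\sigma_t$ is its projection to $\mathcal M$. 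Stochastic completeness ensures non-explosion on $[0,1]$, and the smoothness of the horizontal vector fields makes the development a continuous functional of the geometric rough path lift of $\bar x^\sigma$ by Lyons' continuity of the It\^o--Lyons map. Applying the audited Euclidean result to $\bar x^\sigma$ gives weak convergence of this rough path lift to the Stratonovich lift of a Brownian motion $\bar W$ in $T_{x_0}\mathcal M$ with covariance $\diag(\gamma_1, \ldots, \gamma_d)$, and the continuous mapping theorem transfers the convergence to $(x^\sigma_{\sigma^2 t})_{t \in [0,1]}$, whose limit is by definition the anisotropic Brownian motion on $\mathcal M$ developed from $\bar W$.

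The main obstacle lies in the audit step. Each moment bound used in the Kolmogorov criterion must be derivable from \eqref{eq.mix} alone, without appealing to It\^o calculus or the Doob--Meyer decomposition of $v^\sigma$, tools that need not even make sense in the generality of Theorem \ref{thm.extension} (for instance for jump processes on $\mathbb S^{d-1}$); and the argument that the level-two limiting increment concentrates on a diagonal bilinear form must be rerun using the symmetries \eqref{eq.symm} in place of any explicit computation of the invariant measure. Once this audit is complete, the manifold-level statement is an essentially formal consequence of the Euclidean one, exactly as in Theorem \ref{thm.manifold}.
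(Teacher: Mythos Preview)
Your proposal is correct and follows essentially the same route as the paper: the paper's proof of Theorem~\ref{thm.final} (the precise restatement of Theorem~\ref{thm.extension}) consists precisely in auditing the proof of Theorem~\ref{thm.manifold}, observing that tightness via Lemma~\ref{lem.readytouse} needs only the mixing bound~\eqref{eq.mix} together with boundedness and centring of the velocity, and that identification of the limit needs in addition the reflection symmetry~\eqref{eq.symm}, whereupon the manifold statement follows from the Euclidean one by continuity of the It\^o--Lyons map exactly as in Section~\ref{SubsectionManifold}. Your description of both the audit and the reduction matches the paper's argument closely.
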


See Theorem \ref{thm.final} for a precise statement. In this level of generality, in Section \ref{SubsectionExamples} we recover classical results, amongst which Pinsky's so-called random flight \cite{Pinsky} and time-dependent variations of it; the anisotropic Langevin diffusion, where $v$ is an anisotropic Ornstein-Uhlenbeck process; and linear interpolation of symmetric random walks as in \cite{Breuillard}. It is unclear whether or not the methods of X.M. Li \cite{XueMei1, XueMei2} or Herzog, Hottovy and Volpe \cite{HHV16} can get back such a result. In a somewhat independent direction, the interesting work \cite{ChevyrevMelbourne} of Chevyrev and coauthors studies this kind of convergence in deterministic systems.
\par
\medskip
The outline of the article is the following. In the next Section \ref{SectionMixingVelocity}, we study the velocity process solution of equation \eqref{eq.defv}. We characterise its invariant measure and establish the mixing properties which are the key ingredients in our approach of the homogenisation phenomenon. Section \ref{SectionProof} is then devoted to the proofs of our main Theorem \ref{thm.euclidean} and \ref{thm.manifold}. More precisely, in Section \ref{SubsectionTightness}, we show the tightness of the rough path lift of the process in the Euclidean setting. In Section \ref{SubsectionLimit}, we then identify the limit as a Brownian motion on the underlying two-step nilpotent Lie group. This completes the proof of Theorem \ref{thm.euclidean} in the Euclidean setting. Finally, in Section \ref{SubsectionManifold}, we use the continuity of the It\^o map to extend the proof of homogenisation to an arbitrary complete stochastically complete Riemannian manifold. The last section consists in developments, including Theorem \ref{thm.extension} and comments in Section \ref{SubsectionFinalTheorem}, and various examples in Section \ref{SubsectionExamples}.

\bigskip

\section{Mixing properties of the velocity process}
\label{SectionMixingVelocity}

Let $(B_t)_{t \geq 0}$ be a Euclidean Brownian motion in $\mathbb R^d$ with non degenerate covariance matrix $\Sigma$. Without loss of generality, up to an appropriate choice of coordinate system, we can assume that the matrix $\Sigma$ is diagonal, with square root $A$, namely
\[
\Sigma = \diag\left ( \alpha_1^2, \cdots,  \alpha_d^2 \right), \quad A = \diag \left( \alpha_1, \cdots, \alpha_d  \right).
\]
Let us recall that, by definition, the anisotropic velocity process $(v_t)=(v^1_t, \cdots, v^d_t)$ with values in $\mathbb S^{d-1} \subset \mathbb R^d$ and with intensity $\sigma>0$ is the solution of the Stratonovich stochastic differential equation
\[
\dif v_t = \displaystyle{\sigma \Pi_{v^{\perp}_t} \circ \dif B_t },
\]
where $\Pi_{v^{\perp}_t}$ denotes the projection on the orthogonal of $v_t$. Equivalently, there exist a standard Euclidean Brownian motion $(W_t)_{t \geq 0}$ such that $v_t$ satisfies the It\^o stochastic differential equation
\[ 
\dif v_t = \sigma \Pi_{v^{\perp}_t} A \dif W_t -\frac{\sigma^2}{2} \big(  \Sigma + \tr (\Sigma) \mathrm{Id}  - 2 \langle v_t, \Sigma v_t \rangle \mathrm{Id} \big) v_t \dif t,
\]
or even more explicitly in Euclidean coordinates, for $1 \leq i \leq d$
\begin{equation}\label{eq.coord}
\begin{array}{rll}
\displaystyle{\dif v_t^i } & =& \displaystyle{ - \frac{\sigma^2}{2} v^i_t\left[  \alpha_i^2 +\sum_{j=1}^d \alpha_j^2  - 
 2 \sum_{j=1}^d \alpha_j^2 |v^j_t|^2\right]\dif t}
\displaystyle{  + \sigma \left( \alpha_i \dif W^i_t - v^i_t \sum_{j=1}^d\alpha_j v^j_t \dif W^j_t\right)}.
\end{array}
\end{equation}

\medskip

In the following, $d$ and $\Sigma$ are fixed, and we write $f\lesssim g$ for some quantities $f$ and $g$ whenever $f\leq Cg$ for a constant $C>0$ independent of any other parameter.

\bigskip

\subsection{Invariant measure}
\label{SubsectionInvariantMeasure}

The object of this section is to establish that the velocity process $(v_t)_{t \geq 0}$ is ergodic in $\mathbb S^{d-1}$ and to write down its invariant measure explicitly. From equation \eqref{eq.coord}, it is not difficult to express the infinitesimal generator $L$ of the process and try to solve the equation $L^{*} \mu=0$. Nevertheless, since we are working on the sphere, integrations by parts and computations are quite unpleasant, and we prefer to introduce a natural Euclidean lift of the velocity process. Namely, if $\|\cdot\|$ denotes the standard Euclidean norm, consider the $\mathbb R^d$-valued process $(u_t)_{t \geq 0}$ starting from $u_0 \neq 0$ such that $v_0=u_0/\|u_0\|$, and solution of the stochastic differential equation system
\[ 
\dif u_t^i = \frac{\sigma^2}{2} \left(  - u^i_t \|u_t\|^2+\alpha^2_i u_t^i \right) \dif t + \sigma \alpha_i \|u_t\| \dif W^i_t, \quad 1 \leq i \leq d.
\]
Equivalently, it is the solution to the Stratonovich stochastic differential equation
\[
\dif u_t = -\frac{\sigma^2}2\|u_t\|^2u_t\,\dif t + \sigma\|u_t\|\circ\dif B_t.
\]
Then, a direct application of It\^o's formula shows that the projection $u_t/\|u_t\|$ on $\mathbb S^{d-1}$ satisfies equation \eqref{eq.coord}. To show that $u_t$ is ergodic and find an explicit expression for its invariant measure, let us now perform the simple linear change of variable $y_t :=A^{-1} u_t = \Sigma^{-1/2} u_t$. By It\^o's formula we get 
\[
\dif y_t^i = \frac{\sigma^2}{2} \left( -\| A y_t\|^2 y_t^i + \alpha_i^2 y_t^i \right) \dif t + \sigma \| A y_t\| \dif W_t.
\]
Setting  $V_A(y) := -\log \|A y\| + \frac{1}{2} \| y\|^2$, the infinitesimal generator $L_y$ of $y_t$ is given by
\[
L_y = \frac{\sigma^2}{2} \|A y\|^2 L_0, \quad \text{where} \quad 
L_0 :=\left(  -\nabla V_A \cdot \nabla + \Delta \right).
\]
The diffusion process with generator $L_0$ is naturally ergodic with  invariant measure proportional to $\E^{-V_A}$ so that $(y_t)_{t \geq 0}$ is also ergodic with invariant measure 
\[
\nu (\dif y) := C_{A} \,  \|A y\|^{-1} \E^{-\frac{1}{2} \|y\|^2}\,\dif y,
\]
where $C_{A}$ is a normalizing constant. In other words, the Euclidean lift $(u_t)_{t \geq 0}$ of $(v_t)_{t\geq 0}$ is ergodic in $\mathbb R^d$ and its invariant measure is proportional to $\|\cdot\|^{-1}$ times the centred Gaussian measure with covariance $\Sigma$. One can then compute the invariant measure of the velocity process as the image measure of the latter with respect to the projection on the sphere.

\begin{lemma}\label{lem.mesinv}
The velocity process $(v_t)_{t \geq 0}$ is ergodic in $\mathbb S^{d-1}$ and its invariant measure $\mu$ is absolutely continuous with respect to the uniform measure $\dif\theta$ on the sphere, with a density given by
\[
\frac{\dif\mu}{\dif\theta}(\theta)=\frac{\|A^{-1} \theta \|^{1-d}}{\int_{\,\mathbb S^{d-1}} \|A^{-1} \theta \|^{1-d} \dif\theta}.
\]
In particular, the invariant measure $\mu$ of the velocity process is invariant under all the coordinate reflections $(\theta_1, \cdots, \theta_i, \cdots, \theta_d) \mapsto (\theta_1, \cdots, -\theta_i, \cdots, \theta_d)$, for $1 \leq i \leq d$.
\end{lemma}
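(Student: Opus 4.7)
The plan is to obtain $\mu$ as the pushforward under the spherical projection $u\mapsto u/\|u\|$ of the invariant measure of the Euclidean lift $u_t$ established in the preceding discussion, and to transfer ergodicity through this projection. From what was just shown, pushing $\nu$ forward under $A$ (using $\|Ay\|=\|u\|$ and $\dif y=|\det A|^{-1}\dif u$), the invariant measure of $u_t$ on $\mathbb R^d\setminus\{0\}$ is proportional to $\|u\|^{-1}\E^{-\|A^{-1}u\|^2/2}\dif u$, and $u_t$ is ergodic since $y_t$ is.

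The computation of $\mu$ is then a straightforward polar decomposition. Writing $u=r\theta$ with $r>0$ and $\theta\in\mathbb S^{d-1}$, one has $\dif u=r^{d-1}\dif r\,\dif\theta$, so
\[
\mu_u(\dif r\,\dif\theta)\propto r^{d-2}\exp\!\big(-\tfrac12 r^2\|A^{-1}\theta\|^2\big)\dif r\,\dif\theta.
\]
The substitution $s=r\|A^{-1}\theta\|$ in the radial integral gives
\[
\int_0^{+\infty} r^{d-2}\E^{-r^2\|A^{-1}\theta\|^2/2}\dif r = \|A^{-1}\theta\|^{1-d}\int_0^{+\infty} s^{d-2}\E^{-s^2/2}\dif s,
\]
and the $\theta$-independent Gaussian moment on the right is absorbed into the normalisation constant, yielding exactly the announced density.

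Ergodicity of $v_t$ then follows by a short lifting argument: any bounded measurable $g$ on $\mathbb S^{d-1}$ pulls back to the bounded measurable $\tilde g(u):=g(u/\|u\|)$ on $\mathbb R^d\setminus\{0\}$, and by construction the ergodic averages $T^{-1}\int_0^T g(v_t)\dif t$ and $T^{-1}\int_0^T \tilde g(u_t)\dif t$ coincide pathwise. The symmetry claim is immediate from the explicit expression $\|A^{-1}\theta\|^2=\sum_{j=1}^d(\theta^j/\alpha_j)^2$, which is manifestly invariant under each coordinate reflection, so the same is true of $\mu$.

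The only point requiring some care is that the projection $u\mapsto u/\|u\|$ is well-defined along trajectories, i.e.\ that $u_t$ almost surely avoids the origin. This is ensured by the strongly repulsive logarithmic term $-\log\|Ay\|$ in $V_A$, which makes $\{0\}$ an inaccessible point for $y_t$; equivalently, the velocity equation \eqref{eq.defv} is intrinsically defined on $\mathbb S^{d-1}$, so no radial degeneracy can occur. Beyond this routine check, the proof is essentially the radial integration above, and there is no substantive obstacle.
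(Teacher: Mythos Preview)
Your argument is correct and follows essentially the same route as the paper: push forward the invariant measure of the Euclidean lift $u_t$ (which is proportional to $\|u\|^{-1}\E^{-\|A^{-1}u\|^2/2}\dif u$) under the radial projection, pass to polar coordinates $u=r\theta$, and integrate out the radial variable to obtain the density $\|A^{-1}\theta\|^{1-d}$ up to normalisation. The paper presents this as a chain of equalities against a test function $f$ on the sphere, whereas you phrase it as a direct computation of the marginal density, but the substance is identical; your additional remarks on ergodicity, the inaccessibility of the origin, and the symmetry are reasonable elaborations that the paper leaves implicit or absorbs into the surrounding discussion.
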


\begin{proof}
For any bounded measurable test function $f$ on $\mathbb S^{d-1}$, we have
\[
\begin{array}{ll}
\displaystyle{  \int_{\,\mathbb S^{d-1}}  f(v)\mu(\dif v)} & = \displaystyle{C_{A}  \int_{\mathbb R^d}  f\left(\frac{A y}{\| A y\|}\right)\frac{\E^{-\frac{1}{2} \|y\|^2} }{\|A y\|} \dif y}\\
\\
& = \displaystyle{C_{A}  \int_{\mathbb R^d}  f\left(\frac{u}{\| u\|}\right)\|u\|^{-1} \E^{-\frac{1}{2}  \|A^{-1} u\|^2} \frac{\dif u}{\det A}}\\
\\
 & = \displaystyle{C'_A \int_0^{+\infty} \int_{\,\mathbb S^{d-1}}  f\left(\theta\right)r^{-1}\E^{-\frac{1}{2} r^2 \|A^{-1} \theta \|^2} r^{d-1} \dif r \dif\theta}\\
 \\
  & = \displaystyle{\frac{\int_{\,\mathbb S^{d-1}} f(\theta) \|A^{-1} \theta \|^{1-d} \dif\theta}{\int_{\,\mathbb S^{d-1}} \|A^{-1} \theta \|^{1-d} \dif\theta}}.
\end{array}
\]
\end{proof}

The next figures illustrate the relation between the covariance matrix $\Sigma$, the sample paths of the velocity process $(v_t)$ and its invariant measure $\mu$.
The colour map on the sphere is chosen according to the value of the density of the invariant measure: small values of $\|A^{-1} \theta \|^{1-d}$ are represented in light grey whereas large values are represented in dark grey.

\begin{figure}[ht]\par
\includegraphics[width=350pt]{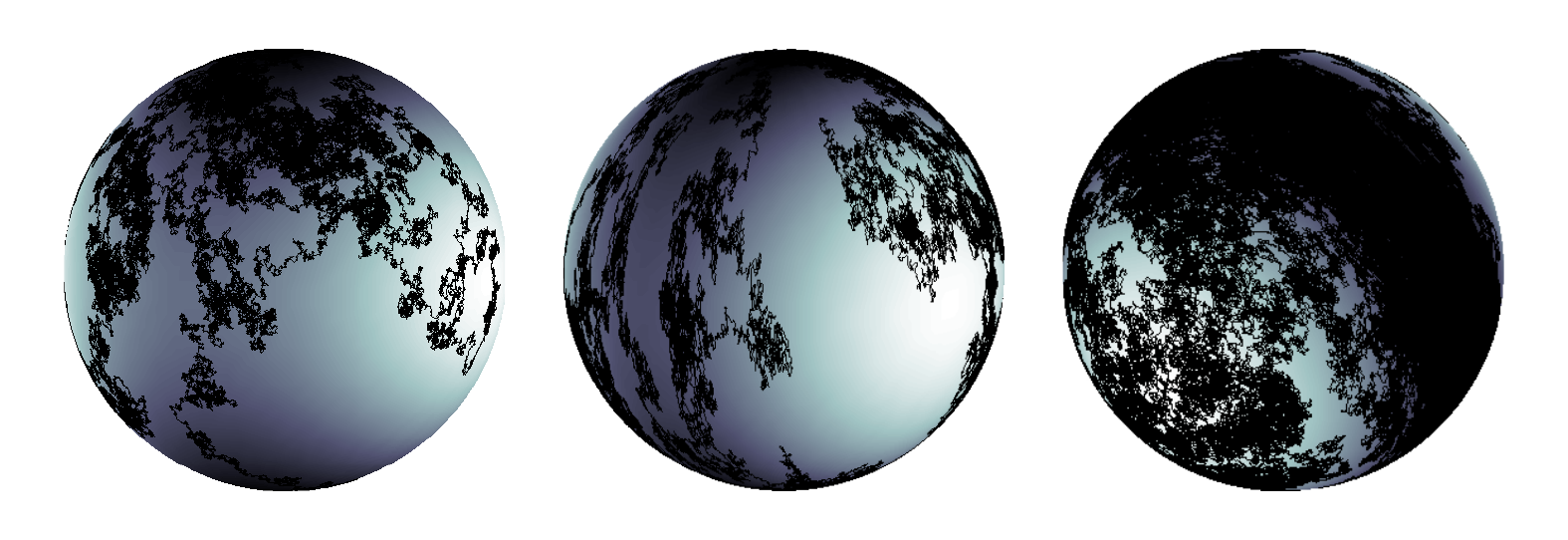}\par
\vspace{-0.3cm}
\caption{From left to right, sample paths of the velocity process and colour map of the invariant probability measure for $\Sigma=\diag(1,1.1,1.2)$, $\Sigma=\diag(1,4,9)$, and $\Sigma=\diag(1,100,100)$.}
\end{figure}

\begin{remark}
Let us emphasise here that the invariant measure $\mu$ of the velocity process actually differs from the projected Gaussian measure with covariance $\Sigma$, also known as angular Gaussian distribution, which, at first sight, could seem like a natural candidate for the velocity's equilibrium measure. Namely, if $f$ is a bounded measurable test function on the sphere, and if $X$ is a Gaussian variable in $\mathbb R^d$ with law $\mathcal N(0 ,\Sigma)$, we have indeed
\[
\begin{array}{ll}
\mathbb E \left[ f\left( \frac{X}{\|X\|}\right)\right] 
& = \displaystyle{\frac{\int_{\,\mathbb S^{d-1}} f(\theta) \|A^{-1} \theta \|^{-d} d\theta}{\int_{\,\mathbb S^{d-1}} \|A^{-1} \theta \|^{-d} d\theta}}.
\end{array}
\]
In other words, the invariant measure $\mu$ admits a density proportional to $\|A^{-1}\theta\|$ with respect to the standard projected Gaussian measure of covariance $\Sigma$.
\end{remark}

\begin{remark}
Going back to the modelisation point of view mentioned in the introduction, where $(v_t)_{t \geq 0}$ is thought as the velocity of a mesoscopic particle in an anisotropic heat bath, the invariant measure $\mu$ also differs from the standard choices for equilibrium measure in directional statistics, such as the Von Mises--Fisher distribution, Fisher--Bingham distribution or wrapped Brownian distributions, see Sections 9.3 and 9.4 of \cite{Mardia} and the references therein. We emphasise here the fact that the dynamics governed by equation \eqref{eq.defv} is fully intrinsic so that the measure $\mu$ is a simple and natural candidate to model anisotropic data; it also has natural interpretation in terms of projection of the invariant measure of the Euclidean lift $(u_t)_{t \geq 0}$.
\end{remark}

\bigskip

\subsection{Mixing properties}
\label{SubsectionMixingProperties}

Let us now establish the strong mixing properties of the velocity process that will be our main tool in the proof of the homogenisation result, Theorem \ref{thm.euclidean}. To avoid changes in the time scale, we fix $\sigma = 1$, from here to the end of the section. We also introduce a few additional notations. If $\lambda$ is a probability distribution on $\mathbb S^{d-1}$, let $\mathbb P_\lambda$ be a probability measure under which the velocity $(v_t)_{t\geq0}$ solves equation \eqref{eq.defv} with initial condition $v_0\sim\lambda$, and $\mathbb E_\lambda$ its associated expectation. We denote by $(P_t)_{t\geq0}$ the semigroup associated to $v$, acting on continuous functions $f:\mathbb S^{d-1}\to\bbR$, and by $(P_t^*)_{t\geq0}$ its dual, acting on probability measures on $\mathbb S^{d-1}$. In other words,
\[ P_tf(x) := \bbE_{\delta_x}[f(v_t)]
   \qquad\text{ and }\qquad
   P_t^*\lambda := \mathcal L(v_t|v_0\sim\lambda), \]
for any such $f$ and $\lambda$.

\medskip

To get to the second part of Proposition \ref{prop.mixing}, we use the well-known fact that since the velocity process $(v_t)_{t \geq0} $ is an elliptic diffusion in a compact Riemannian manifold, here the unit sphere, with invariant probability measure $\mu$, we have the estimate
\begin{equation}
\label{eq.exponentialconv}
\|P_t^*\lambda - \mu\|_\mathrm{TV} \lesssim \exp(-t/\tau)
\end{equation}
for any probability $\lambda$ on $\mathbb S^{d-1}$, for some positive constant $\tau$. Given an interval $[a,b)$ of $[0,\infty)$, define $\mcF_{[a,b)}$ as the $\sigma$-algebra generated by the unit speed velocity process $v_t$, for $a\leq t<b$. We write $A\in \mcF_{[a,b)}$ to say that a random variable is $\mcF_{[a,b)}$-measurable.

\begin{proposition}
\label{prop.exponentialdec}
For any $0\leq s<t$ and any bounded measurable real-valued random variables $P\in\mcF_{[0,s)}$ and $F\in\mcF_{[t,\infty)}$, we have
\[
\big|\bbE_\mu[PF] - \bbE_\mu[P]\,\bbE_\mu[F]\big| \lesssim |P|_\infty |G|_\infty\,\E^{-(t-s)/\tau}.
\]
\end{proposition}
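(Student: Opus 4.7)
The proof is a direct consequence of the uniform exponential convergence \eqref{eq.exponentialconv} combined with the Markov property, so the plan is essentially to reduce the covariance on the left-hand side to a quantity controlled by $\|P^*_{t-s}\delta_{v_s} - \mu\|_\mathrm{TV}$.

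First I would use the Markov property at time $s$ to factorise the expectation. Writing $F = \phi\circ\theta_t$ where $\theta_t$ is the time-shift and $\phi$ is a bounded measurable functional on paths, I define
\[
\psi(y) := \bbE_{\delta_y}\bigl[\phi\bigl((v_r)_{r\geq 0}\bigr)\bigr],
\]
which satisfies $|\psi|_\infty \leq |F|_\infty$. By time-homogeneity, $\bbE_\mu[F \mid \mcF_{[0,s)}] = (P_{t-s}\psi)(v_s)$, and since $\mu$ is invariant one has $\bbE_\mu[F] = \int P_{t-s}\psi\,\dif\mu = \int\psi\,\dif\mu$. Consequently, with $\bar\psi := \psi - \int\psi\,\dif\mu$,
\[
\bbE_\mu[PF] - \bbE_\mu[P]\,\bbE_\mu[F] = \bbE_\mu\bigl[P\cdot(P_{t-s}\bar\psi)(v_s)\bigr].
\]

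The next step is to bound $P_{t-s}\bar\psi$ uniformly on $\mathbb S^{d-1}$. For any $x\in\mathbb S^{d-1}$,
\[
\bigl|(P_{t-s}\bar\psi)(x)\bigr| = \left|\int\psi\,\dif(P^*_{t-s}\delta_x) - \int\psi\,\dif\mu\right| \leq |\psi|_\infty\,\|P^*_{t-s}\delta_x - \mu\|_\mathrm{TV},
\]
so the spectral-gap estimate \eqref{eq.exponentialconv} yields $|P_{t-s}\bar\psi|_\infty \lesssim |F|_\infty\,\E^{-(t-s)/\tau}$. Taking absolute values inside the previous display and bounding $|P|$ by $|P|_\infty$ gives the announced inequality (the $|G|_\infty$ in the statement being a typographical slip for $|F|_\infty$).

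The only mildly delicate point is the measurability bookkeeping when passing from $F\in\mcF_{[t,\infty)}$ to the function $\psi$ on the sphere; this is handled by representing $F$ as a functional of the shifted path and invoking the strong Markov property (or, equivalently, a monotone class argument starting from cylindrical $F$ of the form $\prod_i f_i(v_{t_i})$ with $t_1 < \dots < t_n$ and then extending to all bounded $\mcF_{[t,\infty)}$-measurable functionals). I do not expect any substantive obstacle: once \eqref{eq.exponentialconv} is granted, the whole argument is a two-line consequence of the Markov property, invariance of $\mu$, and the fact that the total variation distance controls bounded test functions.
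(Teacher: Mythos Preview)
Your proof is correct and follows essentially the same route as the paper: both use the Markov property at time $s$ to reduce $F$ to a bounded function on the sphere (your $\psi$, the paper's $\overline g$), then bound the resulting expression via the total-variation estimate \eqref{eq.exponentialconv}, with a monotone class argument handling the passage from cylindrical to general $F$. The only cosmetic difference is that the paper first bounds the covariance by $|P|_\infty\,\bbE_\mu\bigl[|\bbE_\mu[F\mid\mcF_{[0,s]}]-\bbE_\mu[F]|\bigr]$ before invoking Markov, whereas you compute the covariance exactly as $\bbE_\mu\bigl[P\cdot(P_{t-s}\bar\psi)(v_s)\bigr]$; both lead to the same bound.
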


\medskip

\begin{proof}
Since 
\[   
\big|\bbE_\mu[PF]-\bbE_\mu[P]\bbE_\mu[F]\big| \leq |P|_\infty\bbE_\mu\Big[\big|\bbE_\mu[F|\mathcal F_{[0,s]}]-\bbE_\mu[F]\big|\Big],
\]
by the Markov property, it suffices to prove that one has 
\begin{equation}
\label{eq.Gdecorrelation}
\big|\bbE_{P_u^*\lambda}[G] - \bbE_\mu[G]\big| \lesssim |G|_\infty \E^{-u/\tau},
\end{equation}
for any probability measure $\lambda$ on $\mathbb S^{d-1}$ and any real-valued measurable functional $G$. By a monotone class argument, it suffices to prove estimate \eqref{eq.Gdecorrelation} for elementary functionals of the form $G = g(v_{t_1},\dots, v_{t_k})$, for some bounded continuous real-valued function $g$ on $(\bbR^d)^k$ and times $t_1\leq\cdots\leq t_k$. But since the diffusion has the Feller property, the function $\overline{g}(v_0):=\bbE_{v_0}\big[g(v_{t_1},\dots, v_{t_k})\big]$ is continuous on the sphere, so we get \eqref{eq.Gdecorrelation} in that case by applying  \eqref{eq.exponentialconv} to $\overline{g}$.
\end{proof}

\medskip

The remainder of the section is devoted to the proof of the technical Lemma \ref{lem.readytouse}, that states an estimate about iterated integrals involving the covariances between the coordinates of the unit speed velocity process. Given a collection of positive times $s_1,\dots,s_n$, set $\Delta:=\max_{1\leq k< n}(s_k\wedge s_{k+1})$. We denote by $k_0\in\llbracket 1,n-1\rrbracket$ an index where this maximum is attained.

\medskip

\begin{proposition}
\label{prop.mixingcoords}
Under $\mathbb P=\mathbb P_\mu$, and for any indices $1\leq j_1,\dots, j_n\leq d$ and times $s_1,\cdots,s_n\geq0$,
\[   
\left| \bbE\big[v_{s_1}^{j_1}\cdots v_{s_1+\cdots+s_n}^{j_n}\big]\right| \lesssim \E^{-\Delta/{\tau}}.
\]
\end{proposition}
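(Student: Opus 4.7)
The plan is to combine the mixing estimate of Proposition \ref{prop.exponentialdec} with the reflection symmetry of the invariant measure $\mu$ recorded in Lemma \ref{lem.mesinv}. The crucial mechanism is that, by stationarity under $\mathbb P_\mu$ and by invariance of $\mu$ under the reflection flipping the $j$-th coordinate, one has
\[
\bbE_\mu[v_t^j] \;=\; \int_{\mathbb S^{d-1}} \theta^j\,\mu(\dif\theta) \;=\; 0
\]
for every $t \geq 0$ and every $1 \leq j \leq d$, which is what will kill an isolated single factor.

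Set $T_k := s_1 + \cdots + s_k$ and pick $k_0$ as in the statement, so that both $s_{k_0}$ and $s_{k_0+1}$ are at least $\Delta$. I would then isolate the factor at time $T_{k_0}$ by splitting the product into a past piece $P := v_{T_1}^{j_1} \cdots v_{T_{k_0-1}}^{j_{k_0-1}}$ (with the convention $P \equiv 1$ if $k_0 = 1$), the middle single factor $M := v_{T_{k_0}}^{j_{k_0}}$, and a future piece $F := v_{T_{k_0+1}}^{j_{k_0+1}} \cdots v_{T_n}^{j_n}$. Each is bounded by $1$ in absolute value since $v_t \in \mathbb S^{d-1}$, and they sit respectively in $\mcF_{[0,T_{k_0-1}]}$, $\mcF_{\{T_{k_0}\}}$ and $\mcF_{[T_{k_0+1},\infty)}$.

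The heart of the argument is then a two-step application of \eqref{eq.mix}. First, applied to $M$ and $F$ across the gap $s_{k_0+1}$, it yields $|\bbE_\mu[MF] - \bbE_\mu[M]\bbE_\mu[F]| \lesssim \E^{-s_{k_0+1}/\tau}$; combined with $\bbE_\mu[M] = 0$ from the symmetry above, this gives $|\bbE_\mu[MF]| \lesssim \E^{-\Delta/\tau}$. If $k_0 = 1$ this is already the conclusion. Otherwise a second application of \eqref{eq.mix} to $P$ and $MF$ across the gap $s_{k_0}$ gives $|\bbE_\mu[PMF] - \bbE_\mu[P]\bbE_\mu[MF]| \lesssim \E^{-\Delta/\tau}$, and a triangle inequality using $|\bbE_\mu[P]| \leq 1$ closes the estimate.

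The only real subtlety is to notice that the definition of $\Delta$ is tailored precisely so that the middle factor is flanked by two gaps both at least $\Delta$: this is exactly what lets the coordinate symmetry kill the one-dimensional expectation of $M$ and simultaneously allows mixing to absorb both error terms into a single $\E^{-\Delta/\tau}$ bound. Once the splitting scheme is in place, no further computation is required.
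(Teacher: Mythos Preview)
Your proof is correct and matches the paper's own argument essentially line by line: the paper uses the same three-piece decomposition $V_-=P$, $V_0=M$, $V_+=F$, the centering $\bbE_\mu[V_0]=0$, and two applications of Proposition~\ref{prop.exponentialdec} across the gaps $s_{k_0}$ and $s_{k_0+1}$. The only cosmetic difference is the order in which the triangle inequality is unfolded.
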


\medskip

\begin{proof}
For $1\leq i\leq n$, set $t_i := s_1+\cdots+s_i$, and define the bounded quantities
\[ 
V_- := v^{j_1}_{t_1}\cdots v^{j_{k_0-1}}_{t_{k_0-1}}, \qquad  V_0 := v^{j_{k_0}}_{t_{k_0}},\qquad  V_+ := v^{j_{k_0+1}}_{t_{k_0+1}}\cdots v^{j_n}_{t_n}.
\]
Note that $V_0$ is centred. Applying Proposition \ref{prop.exponentialdec} twice, this decomposition gives
\begin{align*}
     \left| \bbE\big[v_{s_1}^{j_1}\cdots v_{s_1+\cdots+s_n}^{j_n}\big] \right|
 & = \big|\bbE[V_-V_0V_+] - \bbE[V_-]\bbE[V_0]\bbE[V_+]\big|   \\
 & \leq \big|\bbE[V_-V_0V_+]
      - \bbE[V_-]\bbE[V_0V_+]\big|
      + |V_-|_\infty\left|\bbE[V_0V_+]
      - \bbE[V_0]\bbE[V_+]\right| \\
 & \lesssim |V_-|_\infty|V_0V_+|_\infty
            \E^{-s_{k_0}/\tau}
          + |V_-|_\infty |V_0|_\infty|V_+|_\infty \E^{-s_{k_0+1}/\tau} \\
 & \lesssim \E^{-\Delta/\tau}.\qedhere
\end{align*}
\end{proof}

\bigskip

\begin{lemma}
\label{lem.readytouse}
Suppose $\mathbb P=\mathbb P_\mu$. Given a positive integer $n$, we have
\[   
\int_{0\leq t_1\leq \cdots \leq t_{2n}\leq T} \left|\bbE_\mu\big[v_{t_1}^{i_1}\cdots v_{t_{2n}}^{i_{2n}}\big]\right| \,\dif t_1\dots \dif t_{2n} \lesssim_n T^n 
\]
for any indices $1\leq i_1,\cdots, i_{2n}\leq d$.
\end{lemma}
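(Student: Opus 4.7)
The plan is to change variables to time increments, apply the exponential decay bound of Proposition~\ref{prop.mixingcoords}, and then decouple the resulting exponential via a pairing-and-averaging trick that makes the remaining integral factor across disjoint pairs of variables.

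Concretely, I would set $s_1 := t_1$ and $s_k := t_k - t_{k-1}$ for $k = 2, \ldots, 2n$. This sends the simplex $\{0 \leq t_1 \leq \cdots \leq t_{2n} \leq T\}$ bijectively with unit Jacobian onto $\{s_i \geq 0 : \sum_{i=1}^{2n} s_i \leq T\}$, and since $v_{t_k} = v_{s_1 + \cdots + s_k}$, Proposition~\ref{prop.mixingcoords} applies directly and yields
\[
\big|\bbE_\mu[v_{t_1}^{i_1}\cdots v_{t_{2n}}^{i_{2n}}]\big| \lesssim \E^{-\Delta/\tau}, \qquad \Delta = \max_{1 \leq k \leq 2n-1}(s_k \wedge s_{k+1}).
\]
The key observation is that the $n$ odd values $k = 1, 3, \ldots, 2n-1$ pick out $n$ pairwise disjoint pairs $(s_{2j-1}, s_{2j})$, $j = 1, \ldots, n$, each contributing a term $s_{2j-1}\wedge s_{2j}$ to the maximum defining $\Delta$. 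Hence $\Delta \geq \max_{1\leq j\leq n}(s_{2j-1}\wedge s_{2j}) \geq \tfrac{1}{n}\sum_{j=1}^n(s_{2j-1}\wedge s_{2j})$ by the elementary max $\geq$ mean inequality, and therefore
\[
\E^{-\Delta/\tau} \leq \prod_{j=1}^n \E^{-(s_{2j-1}\wedge s_{2j})/(n\tau)}.
\]

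Finally, I would enlarge the domain of integration from the simplex to the cube $[0,T]^{2n}$; since the $n$ exponential factors now involve pairwise disjoint pairs of variables, the integral factors as
\[
\prod_{j=1}^n \int_0^T\int_0^T \E^{-(u\wedge v)/(n\tau)}\,\dif u\,\dif v.
\]
Each double integral is bounded by $2n\tau T$ (reduce by symmetry to $\{v\leq u\leq T\}$, integrate $u$ to produce a factor $T-v$, majorize it crudely by $T$, and integrate $\E^{-v/(n\tau)}$ on the half-line). Multiplying the $n$ resulting factors gives $(2n\tau T)^n \lesssim_n T^n$, as claimed. The only point that requires any thought is the pairing-and-averaging step; once that decoupling is in place, the rest is a short, one-dimensional computation.
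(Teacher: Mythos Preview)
Your proof is correct and genuinely differs from the paper's argument after the common opening (change to gap variables $s_k$, enlarge to $[0,T]^{2n}$, and bound the integrand by $\E^{-\Delta(s)/\tau}$ via Proposition~\ref{prop.mixingcoords}). From that point, the paper proceeds combinatorially: it fixes the index $a$ where $\Delta(s)$ is attained, observes that no two gaps larger than $\Delta$ can be adjacent so at least $n$ gaps satisfy $s_k\leq\Delta$, and then partitions $\llbracket 1,2n\rrbracket$ into $\{a\}$, $n-1$ further ``small'' indices, and the remaining $n$ indices. Summing over all such partitions and integrating $\E^{-s_a/\tau}\Ind_{s_{b_1},\dots,s_{b_{n-1}}\leq s_a}$ over the cube yields $T^n$ times a finite constant. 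Your route replaces this bookkeeping entirely: by restricting the maximum defining $\Delta$ to the $n$ odd indices you obtain $n$ disjoint pairs, and the inequality $\max\geq\text{mean}$ converts the single exponential into a product that factors over the cube. This is shorter, avoids the case analysis on partitions, and gives an explicit constant $(2n\tau)^n$; the paper's argument, on the other hand, keeps the full decay rate $1/\tau$ in the exponent rather than the diluted $1/(n\tau)$, which is irrelevant here but could matter if one wanted sharper $n$-dependence.
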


\medskip

\begin{proof}
The idea is to apply Proposition \ref{prop.mixingcoords} with the largest $\Delta$ possible for each tuple $(t_1,\cdots,t_{2n})$. Write first
\[
\int_{0\leq t_1\leq \cdots \leq t_{2n-1}} \left|\bbE\big[v_{t_1}^{i_1}\cdots v_{t_{2n}}^{i_{2n}}\big]\right|\,\dif t_1\dots \dif t_{2n} \leq \int_{[0,T]^{2n}} \left|\bbE\big[v^{i_1}_{s_1}\cdots v^{i_{2n}}_{s_1+\cdots+s_{2n}}\big]\right|\, \dif s_1\dots \dif s_{2n}.
\]
Fix now the tuple $(s_1,\cdots,s_{2n})$, and set
\[ 
\Delta(s) := \max_{1\leq k< 2n}\left(s_k\wedge s_{k+1}\right),
\]
so the integrand in the right hand side above is bounded above by a constant multiple of $\E^{-\Delta(s)/\tau}$, from Proposition \ref{prop.mixingcoords}.

\smallskip

The rest is combinatorics. We first sort the indices $k$ of the gaps $s_k$ according to the value of $s_k$ with respect to $\Delta = \Delta(s)$. Set $a:=\min\{k\in\llbracket 1,2n\rrbracket\,: s_k=\Delta\}$. Then, note that there are at most $n$ gaps $s_k$ of size larger than $\Delta$: otherwise, two of them would be consecutive, and $\Delta$ would not be optimal. This is the same as saying that there are at least $n$ small gaps $s_k\leq\Delta$, including $s_a$. Define $1\leq b_1<\cdots<b_{n-1}\leq 2n$ as the first $(n-1)$ indices different from $a$ corresponding to gaps of size at most $\Delta$. In other words, if $s_k\leq\Delta$, then either $k=b_i$ for some $1\leq i< n$, $k=a$, or $k>a,b_{n-1}$. Finally, denote by $1\leq c_1<\cdots<c_n\leq 2n$ the other indices, so that we have a partition of $\{1,\cdots,2n\}$ in three sets $A(s) := \{a\}$, $B(s) := \{b_1,\cdots,b_{n-1}\}$ and $C(s) := \{c_1,\cdots,c_n\}$ of fixed sizes. Now, given a fixed partition $(\alpha,\beta,\gamma)$ of $\llbracket 1,2n\rrbracket$ with $\alpha=\{\alpha_0\}$ of size $1$, and the set $\beta=\{\beta_1,\dots,\beta_{n-1}\}$ of size $n-1$, we have
\begin{equation*}
\begin{split}
\left|\bbE\big[v^{i_1}_{s_1}\cdots v^{i_{2n}}_{s_1+\cdots+s_{2n}}\big]\right| \Ind_{(A(s),B(s),C(s))=(\alpha,\beta,\gamma)} &\lesssim \E^{-\Delta(s)/\tau}\Ind_{s_{\beta_1},\dots,s_{\beta_{n-1}}\leq\Delta(s)}   \\
&\lesssim \E^{-s_{\alpha_0}/\tau}\Ind_{s_{\beta_1},\dots, s_{\beta_{n-1}}\leq s_{\alpha_0}},
\end{split}
\end{equation*}
from which we get
\begin{align*}
\int_{[0,T]^{2n}} &\left|\bbE\big[v^{i_1}_{s_1}\cdots v^{i_{2n}}_{s_1+\cdots+s_{2n}}\big]\right| \Ind_{(A(s),B(s),C(s))=(\alpha,\beta,\gamma)}\,\dif s_1\dots \dif s_{2n} \\
&\lesssim T^n\int_0^T\E^{-s/\tau}s^{n-1}\dif s \lesssim_n\, T^n
\end{align*}
and the result of the lemma, by summing over the set of all partitions $(\alpha,\beta,\gamma)$ of $\llbracket 1,2n\rrbracket$ with the above size.
\end{proof}

\bigskip

\section{Proof of the main result}
\label{SectionProof}

Let us now describe how the mixing properties of the velocity process derived in Section \ref{SubsectionMixingProperties} imply the homogenisation for the time rescaled position process $(x_{\sigma^2 t}^{\sigma})_{t \geq 0}$, as $\sigma$ goes to infinity, in both Euclidean and Riemannian framework. As mentioned in the introduction, we will actually work with a rough path lift of the kinetic process. We refer the reader to \cite{FrizHairer, BailleulFlows} for gentle introductions to rough paths theory, and given $\gamma\in (0,1)$, we denote by $\textsf{RP}(\gamma)=\textsf{RP}^\gamma([0,1],\bbR^d)$ the set of weak geometric $\gamma$-H\"older rough paths.

\medskip

\textbf{Notations.} We are interested in the stationary case $\mathbb P:=\mathbb P_\mu$, where $\mu$ is the invariant measure of the velocity, as described in Lemma \ref{lem.mesinv}. Define $X^\sigma:t\mapsto x^\sigma_{\sigma^2t}$, so that we are interested in the limiting behaviour of $(X^\sigma_t)_{t\geq0}$. To make good use of the mixing properties of $v$ such as Proposition \ref{prop.exponentialdec} without having to change the time scale, from now on \emph{$(v_t)_{t\geq0}$ will always stand for $(v^\sigma_t)_{t\geq0}$ with $\sigma=1$.} With this convention, we can express the increments of $X^\sigma$ as
\[ X_t - X_s = \frac1{\sigma^2}\int_{\sigma^4s}^{\sigma^4t}v_u\dif u\text. \]
The process $X_\sigma$ being $\mathcal C^1$, it admits a canonical rough path lift $\bfX^\sigma=(X^\sigma,\bbX^\sigma)$, where $\bbX^\sigma$ is defined by
\[ \bbX^\sigma_{ts} := \int_s^t(X^\sigma_u-X^\sigma_s)\otimes\dif X^\sigma_u
                     = \frac1{\sigma^4}\int_{\sigma^4s}^{\sigma^4t}
                                       \int_{\sigma^4s}^uv_z\otimes v_u\dif z\,\dif u\text.\]

Our proof relies on the algebraic properties of rough paths. Namely, that in the 2-step nilpotent group $G\subset\bbR\oplus\bbR^d\oplus(\bbR^d)^{\otimes2}$, the process $\mathbf x^\sigma:t\mapsto (1,X^\sigma_t,\bbX^\sigma_{t0})$ has increments
\[ (\mathbf x^\sigma_s)^{-1}\mathbf x^\sigma_t = (1,X^\sigma_t-X^\sigma_s,\bbX^\sigma_{ts}), \]
which, using the above expressions, are measurable with respect to $\sigma\big((v_u)_{\sigma^4s\leq u <\sigma^4t}\big)$.

Recall that we write $f\lesssim g$ for some quantities $f$ and $g$ when there exists a positive constant $C>0$ depending on $\Sigma$ alone such that $f\leq Cg$. If $C$ is allowed to depend on a parameter, say $p$, we write $f\lesssim_p g$.

\bigskip

\subsection{Tightness in rough paths space}
\label{SubsectionTightness}

We first establish that the family of processes $(X^\sigma_t)$ and their rough paths lifts are tight for the corresponding topology. To do so, we use a standard Lamperti criterion, namely we have the following lemma.
\begin{lemma}
\label{lem.kolmogorovwk}
For every $a\geq1$,
\[ 
\sup_{\sigma>0}\ \mathbb{E}\big[|X^\sigma_t - X^\sigma_s|^a\big] \lesssim_a |t-s|^{a/2}. 
\]
\end{lemma}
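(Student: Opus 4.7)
The plan is to treat the case of even integer exponents $a=2n$ first, and then deduce the general case by Jensen's inequality. Writing the increment as
\[
X^\sigma_t-X^\sigma_s = \frac1{\sigma^2}\int_{\sigma^4s}^{\sigma^4t}v_u\,\dif u,
\]
I would expand $|X^\sigma_t-X^\sigma_s|^{2n}=\big(\sum_i|(X^\sigma_t-X^\sigma_s)^i|^2\big)^n$ via the multinomial formula, which turns it into a sum, over tuples $(i_1,\dots,i_n)\in\llbracket 1,d\rrbracket^n$, of $2n$-fold iterated integrals $\sigma^{-4n}\int v^{i_1}_{u_1}v^{i_1}_{u_1'}\cdots v^{i_n}_{u_n}v^{i_n}_{u_n'}\,\dif u_1\dif u_1'\cdots\dif u_n\dif u_n'$ over $[\sigma^4s,\sigma^4t]^{2n}$. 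After relabelling the $2n$ integration variables as $w_1,\dots,w_{2n}$ with associated indices $j_1,\dots,j_{2n}\in\llbracket 1,d\rrbracket$, taking expectation under $\mathbb P_\mu$, and using the stationarity of $v$ to shift the integration domain to $[0,T]^{2n}$ with $T:=\sigma^4(t-s)$, we are reduced to estimating
\[
\frac1{\sigma^{4n}}\int_{[0,T]^{2n}}\big|\mathbb E\big[v^{j_1}_{w_1}\cdots v^{j_{2n}}_{w_{2n}}\big]\big|\,\dif w_1\cdots\dif w_{2n}.
\]

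The second step is to split $[0,T]^{2n}$ into the $(2n)!$ simplices corresponding to the possible orderings of $w_1,\dots,w_{2n}$. On each simplex, a change of variables putting the times in increasing order produces an integral of the form appearing in Lemma \ref{lem.readytouse} (only the tuple of indices $(j_1,\dots,j_{2n})$ gets permuted, which is harmless since the bound is uniform in the indices). Lemma \ref{lem.readytouse} then yields a bound $\lesssim_n T^n$ for each simplex. Summing over the $(2n)!$ orderings and over the $d^n$ choices of $(i_1,\dots,i_n)$, the factor $\sigma^{4n}$ in $T^n=\sigma^{4n}(t-s)^n$ cancels the prefactor $\sigma^{-4n}$, giving
\[
\mathbb E\big[|X^\sigma_t-X^\sigma_s|^{2n}\big]\lesssim_n (t-s)^n,
\]
uniformly in $\sigma>0$ and in $0\leq s\leq t$.

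The final step is the extension to arbitrary $a\geq 1$. Pick an even integer $2n\geq a$; then Jensen's inequality gives
\[
\mathbb E\big[|X^\sigma_t-X^\sigma_s|^a\big]\leq\mathbb E\big[|X^\sigma_t-X^\sigma_s|^{2n}\big]^{a/(2n)}\lesssim_a (t-s)^{a/2},
\]
which is the announced estimate.

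The routine parts are the multinomial expansion and the combinatorics of permutations; the substantive input is entirely packaged in the mixing estimate of Lemma \ref{lem.readytouse}. The main conceptual point, and the only place where care is required, is checking that after the simplex decomposition and relabelling, the integrand is precisely of the form controlled by that lemma, with constants independent of $\sigma$ thanks to the cancellation between the Brownian scaling $\sigma^{-4n}$ and the length $T^n=\sigma^{4n}(t-s)^n$ of the time window over which the mixing produces only $O(T^n)$ instead of $O(T^{2n})$ growth.
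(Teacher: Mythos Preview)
Your proposal is correct and follows essentially the same approach as the paper: expand $|X^\sigma_t-X^\sigma_s|^{2n}$ as a sum over index tuples, decompose the cube $[0,T]^{2n}$ into the $(2n)!$ ordered simplices, apply Lemma~\ref{lem.readytouse} on each, and then pass to general $a\geq 1$ by Jensen. The only cosmetic difference is that the paper first isolates the estimate $\mathbb{E}\big[|\int_0^T v_t\,\dif t|^{2n}\big]\lesssim_n T^n$ and then invokes stationarity, whereas you invoke stationarity first to shift the domain; the content is identical.
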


\medskip

\begin{proof}
Given any positive time $T$ and any positive integer $n$, we show that one has
\begin{equation}\label{eq.kolmogorovsimple}
\mathbb{E}\left[{\left|\int_0^Tv_t\,\dif t\right|^{2n}}\right] \leq C_nT^n 
\end{equation}
for some positive constant $C_n$ depending only on $n$. The inequality of the lemma follows as a consequence since for any positive integer $n$ such that $2n\geq a$, we have
\[   
\mathbb{E}\big[{|X^\sigma_t - X^\sigma_s|^a}\big]   = \mathbb{E}\big[{|X^\sigma_{t-s}|^a}\big]
\leq \frac1{\sigma^{2a}}\,\mathbb{E}\left[{\left|\int_0^{\sigma^4(t-s)}v_u\,\dif u\right|^{2n}}\right]^{a/2n}
\leq C_n^{a/2n}(t-s)^{a/2} \text. 
\]
Given $T>0$ and $n\in\mathbb{N}^*$, we have 
\begin{align*}
     \mathbb{E}\left[{\left|\int_0^Tv_t\,\dif t\right|^{2n}}\right]
= &\ \mathbb{E}\left[{\left(\sum_{1\leq i\leq d}\left(\int_0^Tv^i_t\,\dif t\right)^2\right)^n}\right] \\
= &  \sum_{1\leq i_1,\cdots,i_n\leq d}\int_{[0,T]^{2n}}
     \mathbb{E}\Big[{v^{i_1}_{t_1}v^{i_1}_{t_2}\cdots v^{i_n}_{t_{2n-1}}v^{i_n}_{t_{2n}}}\Big]
     \,\dif t_1\cdots\,\dif t_{2n},
\end{align*}
with the following estimate for each individual term on the right hand side. Fix $1\leq j_k\leq d$, for $1\leq k\leq 2n$. For any permutation $\phi\in\mathfrak S_{2n}$, we have from Lemma \ref{lem.readytouse}
\begin{align*}
       \int_{[0,T]^{2n}}
       \mathbb{E}\Big[{v^{j_1}_{t_1}\cdots v^{j_{2n}}_{t_{2n}}}\Big]
       \Ind_{t_{\phi(1)}<\cdots<t_{\phi(2n)}}
       \,\dif t
   = & \int_{0\leq t_1\leq \cdots\leq t_{2n}\leq T}
       \mathbb{E}\Big[{v^{j_{\phi(1)}}_{t_1}\cdots v^{j_{\phi(2n)}}_{t_{2n}}}\Big]
       \,\dif t_1\cdots\,\dif t_{2n}   \\
\lesssim_n &\ T^n,
\end{align*}
from which the result of the Lemma follows by summation over $\phi$ and $j$.
\end{proof}

\medskip

We use the Hilbert-Schmidt norm $|\cdot|$ on $\bbR^d\otimes\bbR^d \simeq L(\mathbb{R}^d)\simeq \mathbb{R}^{d^2}$; it coincides with the Euclidean norm on $\mathbb{R}^{d^2}$.

\medskip

\begin{lemma}
\label{lem.kolmogorovRP}
For every $a>0$,
\[ 
\sup_{\sigma>0} \mathbb{E} \big[{|\mathbb X^\sigma_{ts}|^a}\big] \lesssim_a |t-s|^a. 
\]
\end{lemma}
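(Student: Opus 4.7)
The plan is to mimic the proof of Lemma \ref{lem.kolmogorovwk}, with two differences: we work with the iterated integral defining $\bbX^\sigma$ rather than with $X^\sigma$ itself, and we expect the optimal scaling to be $|t-s|^a$ (reflecting that Lévy's area over an interval of length $T$ should have size of order $T$, matching the $2$-rough-path H\"older exponent of $1/2+1/2$). First I would use stationarity under $\mathbb P_\mu$: after the change of variables $u\to\sigma^4 s+u$, $z\to\sigma^4 s+z$, we have
\[ \bbX^\sigma_{ts} \stackrel{(d)}{=} \frac{1}{\sigma^4}\int_0^{T}\int_0^u v_z\otimes v_u\,\dif z\,\dif u, \qquad T:=\sigma^4(t-s). \]
By Jensen's inequality, it suffices to establish, for every positive integer $n$,
\[ \mathbb E\left[\left|\int_0^T\int_0^u v_z\otimes v_u\,\dif z\,\dif u\right|^{2n}\right] \lesssim_n T^{2n}, \]
since then $\mathbb E[|\bbX^\sigma_{ts}|^{2n}] \lesssim_n T^{2n}/\sigma^{8n}=|t-s|^{2n}$ and one chooses $2n\geq a$.

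Second, I would expand the Hilbert-Schmidt norm. Writing $A_{ij}:=\int_{0\leq z\leq u\leq T}v^i_zv^j_u\,\dif z\,\dif u$, one has $|\int\int v_z\otimes v_u|^2=\sum_{i,j}A_{ij}^2$, and hence
\[ \mathbb E\left[\left|\int_0^T\int_0^u v_z\otimes v_u\,\dif z\,\dif u\right|^{2n}\right] = \sum_{i_1,j_1,\dots,i_n,j_n}\mathbb E\!\left[\prod_{k=1}^n A_{i_kj_k}^2\right]. \]
Each term on the right is a $4n$-fold iterated integral over $[0,T]^{4n}$ of the expectation of a product of $4n$ coordinates of the stationary velocity process, up to ordering constraints coming from the individual $A_{ij}$'s. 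The number of terms in the sum is a constant depending only on $n$ and $d$, so it remains to bound each term.

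Third, I would decompose the integration domain $[0,T]^{4n}$ into the $(4n)!$ full orderings. On each ordered piece, Lemma \ref{lem.readytouse} (applied with its integer parameter equal to $2n$, so that the domain carries $4n$ ordered times) yields a bound of order $T^{2n}$, uniformly in the indices and in the permutation used. Summing over the (finitely many) permutations and indices gives the desired bound $\lesssim_n T^{2n}$, which concludes the argument.

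The only genuine subtlety is checking that the bookkeeping between the $2n$-th power of the Hilbert-Schmidt norm, the $4n$-fold time integral, and the exponent $T^{2n}$ provided by Lemma \ref{lem.readytouse} is consistent; the rest is a straightforward expansion combined with the mixing estimates already established. I do not expect any serious obstacle here, as the proof reduces to a combinatorial rewriting plus an application of the key estimate proved in the previous section.
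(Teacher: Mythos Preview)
Your proposal is correct and follows essentially the same approach as the paper: reduce to even moments $2n$, expand the Hilbert--Schmidt norm into a sum of $4n$-fold time integrals of expectations $\bbE[v^{j_1}_{t_1}\cdots v^{j_{4n}}_{t_{4n}}]$, split into full orderings, and apply Lemma~\ref{lem.readytouse} with parameter $2n$ to get the bound $T^{2n}$. The paper is slightly more terse (it writes the integrals $I_\ell$ explicitly and then refers back to the permutation argument of Lemma~\ref{lem.kolmogorovwk}), but the content is the same.
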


\smallskip

\begin{proof}
As above, the inequality of the statement follows from an inequality of the form
\[   
\bbE\left[{\left|\int_{0\leq s\leq t\leq T}v_s\otimes v_t\, \dif s\dif t\right|^{2n}}\right] \leq C_nT^{2n},
\]
for some positive constant $C_n$ depending only on $n$. Fix $T>0$ and $n\in\bbN^*$, and set for $\ell\in\llbracket 1,d\rrbracket^{4n}$
\[ 
I_\ell := \int_{0\leq s_1\leq t_1\leq T}\dots\int_{0\leq s_{2n}\leq t_{2n}\leq T} \bbE\Big[{v^{\ell_1}_{t_1}v^{\ell_2}_{s_1}\cdots v^{\ell_{4n-1}}_{t_{2n}}v^{\ell_{4n}}_{s_{2n}}}\Big] \, \dif s_1\dif t_1\cdots \dif s_{2n}\dif t_{2n},
\]
so we have
\begin{align*}
   \bbE\left[{\left|\int_0^T\int_0^tv_s\otimes v_t\,\dif s\dif t\right|^{2n}}\right] &= \bbE\left[{\left(\sum_{1\leq i,j\leq d} \left(\int_0^T\int_0^tv^i_sv^j_t\dif s\dif t\right)^2 \right)^n}\right]   \\
&= \sum_{i,j\in\llbracket 1,d\rrbracket^n} I_{i*j}
\end{align*}
with $i*j=(i_1,j_1,i_1,j_1,\cdots,i_k,j_k,i_k,j_k)$. As in Lemma \ref{lem.kolmogorovwk}, estimating each $I_{i*j}$ using Lemma \ref{lem.readytouse} does the job.
\end{proof}

\medskip

One can then apply the Kolmogorov-Lamperti tightness criterion for rough paths stated in Corollary A.12 of \cite{FV10} to get the following result from Lemma \ref{lem.kolmogorovwk} and Lemma \ref{lem.kolmogorovRP}.

\medskip

\begin{corollary}
\label{cor.tightness}
Pick $1/3<\gamma<1/2$. The family $\left\{\mathcal L(\bfX^\sigma)\right\}_{\sigma>0}$ of distributions on $\mathsf{RP}(\gamma)$ is tight.
\end{corollary}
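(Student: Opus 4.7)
The plan is to invoke directly the Kolmogorov--Lamperti tightness criterion for rough paths, stated by the author as Corollary A.12 of \cite{FV10}. The criterion asserts that a family $(\bfX^\sigma)=(X^\sigma,\bbX^\sigma)$ of weak geometric rough paths is tight in the $\gamma$-H\"older rough path topology as soon as, for some exponent $q\geq1$ with $1/2-1/q>\gamma$, one has uniform moment bounds of the form
\[ \sup_\sigma\bbE\big[|X^\sigma_t-X^\sigma_s|^q\big]\lesssim|t-s|^{q/2},\qquad \sup_\sigma\bbE\big[|\bbX^\sigma_{ts}|^{q/2}\big]\lesssim|t-s|^{q/2}. \]

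To apply it, I would first fix any real $q>2/(1-2\gamma)$; this is possible since $\gamma<1/2$, and it ensures that $1/2-1/q>\gamma$. Lemma \ref{lem.kolmogorovwk} used with exponent $a=q$ yields the first bound directly, with constant uniform in $\sigma$. For the second, Lemma \ref{lem.kolmogorovRP} applied with exponent $a=q/2$ gives
\[ \sup_\sigma \bbE\big[|\bbX^\sigma_{ts}|^{q/2}\big]\lesssim|t-s|^{q/2}, \]
which is the required bound on the second level. The criterion then immediately delivers the tightness of $\{\mathcal L(\bfX^\sigma)\}_{\sigma>0}$ in $\mathsf{RP}(\gamma)$. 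The lower constraint $\gamma>1/3$ plays no active role in the argument: it is only needed so that $\mathsf{RP}(\gamma)$ is the correct ambient space of level-two weak geometric H\"older rough paths.

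I do not anticipate any genuine obstacle at this step: the substantive work, namely converting the mixing estimate of Proposition \ref{prop.exponentialdec} into moment bounds on both levels of the rough path lift, has already been carried out in Lemmas \ref{lem.kolmogorovwk} and \ref{lem.kolmogorovRP}. What remains is an off-the-shelf appeal to the abstract Kolmogorov-type tightness theorem, with the two free parameters ($q$ and $\alpha=1/2$) matched so that the Hölder regularity gap $\alpha - 1/q$ exceeds the target $\gamma$.
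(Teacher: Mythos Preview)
Your proposal is correct and matches the paper's own approach exactly: the paper simply states that one applies the Kolmogorov--Lamperti tightness criterion for rough paths (Corollary~A.12 of \cite{FV10}) to the moment bounds of Lemmas~\ref{lem.kolmogorovwk} and~\ref{lem.kolmogorovRP}. Your version is in fact slightly more explicit, spelling out the choice of exponent $q>2/(1-2\gamma)$ needed to make the criterion bite.
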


\bigskip

\subsection{Brownian limit}
\label{SubsectionLimit}

The family of processes $(X^\sigma_t)$ and their lifts being tight for the rough paths topology, in order to establish its convergence, we are left to identify the possible limit process. Our strategy here is to prove that the latter is necessarily a stationary process with independent Gaussian increments on the underlying nilpotent group, and therefore is a Brownian motion. Let us set 
\[ 
\gamma_i := 2\int_0^\infty \mathbb{E}[v_0^iv_t^i] \,\dif t\text. 
\]

\medskip

\begin{proposition}
\label{prop.convergencewk}
For every $\gamma<1/2$, the processes $X^\sigma$ converge in distribution in $\mathcal C^\gamma([0;1],\bbR^d)$ to the Brownian motion on $\bbR^d$ with covariance matrix $\diag(\gamma_1,\cdots,\gamma_d)$, as $\sigma$ goes to $\infty$.
\end{proposition}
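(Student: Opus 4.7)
The plan is as follows. Tightness in $\mathcal C^\gamma$ is provided by Corollary \ref{cor.tightness}, so it suffices to identify any weak subsequential limit $X$ as the Brownian motion with covariance $\diag(\gamma_1,\dots,\gamma_d)$. I will do this by characterising $X$ as a centred continuous process with stationary, independent, Gaussian increments of the claimed covariance. Continuity is automatic, and stationarity of the increments of $X^\sigma$, hence of $X$, is inherited from the $\mu$-stationarity of $v$ via the identity
\[
X^\sigma_t - X^\sigma_s = \sigma^{-2}\int_{\sigma^4 s}^{\sigma^4 t}v_u\,\dif u.
\]

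Computing the limiting covariance is direct. Fubini yields
\[
\mathbb E\big[(X^\sigma_t-X^\sigma_s)^i(X^\sigma_t-X^\sigma_s)^j\big] = \sigma^{-4}\int_{[\sigma^4 s,\sigma^4 t]^2}\mathbb E[v_u^i v_w^j]\,\dif u\,\dif w.
\]
For $i\neq j$, the reflection symmetry of $\mu$ about the $i$-th coordinate (Lemma \ref{lem.mesinv}) forces $\mathbb E[v_u^i v_w^j]=0$, so the off-diagonal terms vanish exactly. For $i=j$, set $C_i(r):=\mathbb E[v_0^i v_r^i]$, which is even and satisfies $|C_i(r)|\lesssim e^{-r/\tau}$ by Proposition \ref{prop.exponentialdec}. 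A change of variables then gives
\[
\mathbb E\big[(X^\sigma_t-X^\sigma_s)^i(X^\sigma_t-X^\sigma_s)^i\big] = 2\sigma^{-4}\int_0^{T}(T-r)\,C_i(r)\,\dif r \;\longrightarrow\;(t-s)\gamma_i
\]
as $\sigma\to\infty$, where $T=\sigma^4(t-s)$ and $\gamma_i=2\int_0^\infty C_i(r)\,\dif r$.

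Independence and Gaussianity of the limiting increments are the main step. I would treat both together via a Bernstein-style blocking argument. Given disjoint subintervals $[s_1,t_1],\dots,[s_K,t_K]\subset[0,1]$ and a small buffer $\delta>0$, replace each increment by the truncation $\tilde X^{\sigma,\delta}_k := \sigma^{-2}\int_{\sigma^4(s_k+\delta)}^{\sigma^4 t_k}v_u\,\dif u$. These truncated increments are measurable with respect to $v$ on disjoint time-intervals separated by at least $\sigma^4\delta$, so Proposition \ref{prop.exponentialdec} ensures that their joint characteristic function factorises in the limit $\sigma\to\infty$, giving the desired asymptotic independence. Within each block, marginal Gaussianity of $\tilde X^{\sigma,\delta}_k$ follows from a moment computation: odd moments vanish by reflection symmetry, while even moments expand as iterated integrals of $v$-correlations whose limit is identified via Lemma \ref{lem.readytouse}. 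Meanwhile, the $L^2$-error $\tilde X^{\sigma,\delta}_k - (X^\sigma_{t_k}-X^\sigma_{s_k})$ is $O(\sqrt\delta)$ uniformly in $\sigma$ by Lemma \ref{lem.kolmogorovwk}, so letting $\sigma\to\infty$ and then $\delta\to 0$ yields the claim.

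The main technical obstacle lies in the moment identification within a single block. Lemma \ref{lem.readytouse} supplies the correct $O(T^n)$ upper bound on the $2n$-point iterated integral, but not the Gaussian constant. Pinning down the leading coefficient requires decomposing the integral into a sum indexed by ``pairings'' of the time variables: in the limit $T\to\infty$, the dominant contributions come from nearest-neighbour pairings within the block, each pair contributing a factor $\int_{-\infty}^\infty C_i(r)\,\dif r=\gamma_i$, while any other pairing involves widely separated consecutive times forcing an extra decaying exponential factor and is therefore negligible. Summing over admissible pairings then reproduces exactly the Wick/Isserlis formula $(2n)!/(2^n n!)\big((t_k-s_k)\gamma_i\big)^n$ for the Gaussian moment. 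This is the standard combinatorial heart of CLTs for strongly mixing stationary processes, transposed here to the continuous-time setting.
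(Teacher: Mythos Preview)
Your proposal is correct, but it takes a harder road than necessary, and than the paper does. The tightness, stationarity, and covariance computations are essentially identical to the paper's. For independence of increments you use the same $\delta$-buffer trick together with Proposition~\ref{prop.exponentialdec}; the paper does exactly this (with buffer $\eps$ on the right endpoint rather than the left, which is immaterial).

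The genuine divergence is in how you obtain Gaussianity. You propose to prove it directly by the method of moments: odd moments vanish by reflection symmetry, and the $2n$-th moment is identified via the pairing combinatorics you sketch, reproducing the Wick constant $(2n)!/(2^n n!)$. This is a valid and classical route, but it is precisely the ``main technical obstacle'' you flag, and Lemma~\ref{lem.readytouse} alone does not supply the constant---you would have to redo the combinatorics more carefully to isolate the leading term. The paper bypasses this entirely: once you know that any limit $X$ is a \emph{continuous} $\bbR^d$-valued process with stationary independent increments, it is automatically a L\'evy process, and continuity of paths forces it to be a Brownian motion with drift. The drift vanishes because each $X^\sigma_1$ is centred. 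Thus Gaussianity comes for free from the L\'evy characterisation, and only the covariance matrix remains to be computed---which you already did. Your moment argument is not wrong, but it is doing work that structural reasoning makes unnecessary.
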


\medskip

\begin{proof}

\emph{Stationarity and independence.} 
We first show that any $\bbR^d$-valued process $X$ whose law $\widehat{\mathbb P}$ is a limit point of $(\mathcal L(X^\sigma))_{\sigma>0}$ in $\mathcal C^\gamma([0;1],\bbR^d)$ as $\sigma$ tends to $\infty$ has stationary independent increments.

\smallskip

Indeed, since $v_0$ has distribution the invariant measure of the diffusion $v$, the increments of $X^\sigma$ are stationary for every $\sigma$, so the increments of $X$ are stationary as well. Fix now $0\leq s_1<t_1\leq\cdots\leq s_n<t_n\leq 1$, and bounded continuous functions $F_i:\bbR^d\to\bbR$, for $1\leq i\leq n$. Fix $\eps>0$ small enough. From a repetitive use of Proposition \ref{prop.exponentialdec}, as used in Proposition \ref{prop.mixingcoords}, we have
\[ 
\left| \mathbb{E}\left[{\prod_{1\leq i\leq n}F_i(X^\sigma_{t_i-\eps}-X^\sigma_{s_i})}\right] - \prod_{1\leq i\leq n}\mathbb{E}\left[F_i(X^\sigma_{t_i-\eps}-X^\sigma_{s_i})\right]\right| \lesssim_n |F_1|_{L^\infty}\cdots|F_n|_{L^\infty}\,\E^{-\sigma^4\eps/\tau} 
\]
for some positive constant $\tau$, and we see that
\[ 
\widehat{\mathbb E}\left[\prod_{1\leq i\leq n}F_i(X_{t_i-\eps}-X_{s_i})\right] = \prod_{1\leq i\leq n}\widehat{\mathbb E}\big[F_i(X_{t_i-\eps}-X_{s_i})\big], 
\]
sending $\sigma$ to $\infty$ along a proper subsequence. Using the boundedness and continuity of the functions $F_i$ and the continuity of the process $X$, we can send $\eps$ to $0$ and see that $X$ has independent increments. So $X$ is a Brownian motion; it has null mean since every $X^\sigma_1$ has null mean, and its covariance is given by the limit of the covariances of the $X^\sigma_1$.

\medskip

\emph{Covariance formula.} First, it follows from the identity 
\[ 
\mathcal L(v^1,\cdots, v^i,\cdots,v^n) = \mathcal L(v^1,\cdots,-v^i,\cdots,v^n)
\]
that different components of $X_1$ have null covariance since this is the case for different components of $X^\sigma_1$. Now, for $1\leq i\leq d$, we have
\begin{align*}
      \mathbb{E}\big[\big((X^\sigma_1)^i\big)^2\big]
   &= \frac1{\sigma^4}\int_0^{\sigma^4}\int_0^{\sigma^4}\mathbb{E}[v^i_sv^i_t]\,\dif s\,\dif t = \frac2{\sigma^4}\int_0^{\sigma^4}\int_t^{\sigma^4}
      \mathbb{E}[v^i_sv^i_t]\,\dif s\,\dif t   \\
   &= \frac2{\sigma^4}\int_0^\infty\int_0^\infty \Ind_{t+u\leq\sigma^4}
      \mathbb{E}[v^i_{t+u}v^i_t]\dif u\,\dif t = 2\int_0^\infty\left(1-\frac u{\sigma^4}\right)_+ \mathbb{E}[v^i_u v^i_0] \dif u
\end{align*}
with $(\cdot)_+$ the positive part. According to Proposition \ref{prop.exponentialdec}, the integrand is smaller than a constant multiple of $\exp(-u/\tau)$, uniformly on $\sigma$. It is integrable, so we see from Lebesgue dominated convergence theorem that the above variance tends to $\gamma_i$.
\end{proof}

\medskip

\begin{theorem}
\label{thm.convergenceRP}
Pick $1/3<\gamma<1/2$. The processes $\bfX^\sigma$ converge in law in $\mathsf{RP}(\gamma)$ to the Brownian rough path on $\bbR^d$ with covariance matrix $\diag(\gamma_1,\cdots,\gamma_d)$, as $\sigma$ goes to $\infty$.
\end{theorem}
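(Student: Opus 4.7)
The plan is to lift the argument of Proposition \ref{prop.convergencewk} one level up, working with the group-valued process $\mathbf x^\sigma$ in the two-step nilpotent Lie group $G\subset\bbR\oplus\bbR^d\oplus(\bbR^d)^{\otimes 2}$ introduced in the \textbf{Notations}. By Corollary \ref{cor.tightness}, the family $\{\mathcal L(\bfX^\sigma)\}_{\sigma>0}$ is tight in $\mathsf{RP}(\gamma)$, so along any sequence $\sigma\to\infty$ I can extract a subsequence along which $\bfX^\sigma$ converges in law to some $\widehat\bfX=(\widehat X,\widehat\bbX)$. Proposition \ref{prop.convergencewk} already identifies the first level: $\widehat X$ is a Brownian motion with covariance $\diag(\gamma_1,\ldots,\gamma_d)$. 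Each $\bfX^\sigma$ is the canonical lift of a smooth path, hence weakly geometric, and this property is preserved in the rough path topology, so $\widehat\bfX$ is weakly geometric too. The symmetric part of $\widehat\bbX_{ts}$ therefore equals $\tfrac12(\widehat X_t-\widehat X_s)^{\otimes 2}$, and only the L\'evy area $\widehat A^{ij}_{ts}:=\widehat\bbX^{ij}_{ts}-\widehat\bbX^{ji}_{ts}$, $i<j$, remains to be identified.

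\medskip

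I would then show that $\widehat{\mathbf x}:t\mapsto(1,\widehat X_t,\widehat\bbX_{t0})$ has stationary and independent increments in $G$. Stationarity is clear: for every $\sigma>0$, the group increment $(\mathbf x^\sigma_s)^{-1}\mathbf x^\sigma_t$ is a measurable functional of $(v_u)_{\sigma^4s\leq u<\sigma^4t}$, whose law under $\mathbb P_\mu$ depends only on $t-s$. For asymptotic independence on disjoint intervals $[s_1,t_1],\ldots,[s_n,t_n]\subset[0,1]$, pick $\eps>0$ so that the shrunken intervals $[s_i,t_i-\eps]$ are separated by gaps of length at least $\eps$, and run the argument of the \emph{Stationarity and independence} step of Proposition \ref{prop.convergencewk} with the group increments in place of the Euclidean ones. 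An iterated application of Proposition \ref{prop.exponentialdec} then gives, for every bounded continuous $F_i:G\to\bbR$,
\[
\left|\mathbb E\!\left[\prod_{i=1}^n F_i\big((\mathbf x^\sigma_{s_i})^{-1}\mathbf x^\sigma_{t_i-\eps}\big)\right]-\prod_{i=1}^n\mathbb E\!\left[F_i\big((\mathbf x^\sigma_{s_i})^{-1}\mathbf x^\sigma_{t_i-\eps}\big)\right]\right|\lesssim_n\prod_{i=1}^n|F_i|_\infty\,\E^{-\sigma^4\eps/\tau}.
\]
Passing to the subsequential limit $\sigma\to\infty$ and then letting $\eps\to 0$ via the continuity of $\widehat{\mathbf x}$ yields independent increments for $\widehat{\mathbf x}$ on $G$.

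\medskip

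A continuous $G$-valued process starting at the identity with stationary independent increments is a Brownian motion on the nilpotent group $G$, determined by a left-invariant second-order generator whose level-one symbol is fixed by the covariance of $\widehat X$. The only remaining freedom is a constant drift in the level-two direction, namely the mean of the L\'evy area; to kill it I would use the coordinate reflection symmetries in Proposition \ref{prop.mixing}. Indeed the map $v^i\mapsto -v^i$ preserves the law of the unit-speed velocity process, hence flips the sign of $X^{\sigma,i}$ only and sends $A^{ij}_{ts}$ to $-A^{ij}_{ts}$ for every $j\neq i$, so $\mathbb E[A^{ij}_{ts}]=0$ for each $\sigma>0$; the moment bound of Lemma \ref{lem.kolmogorovRP} supplies the uniform integrability needed to propagate this to $\widehat{\mathbb E}[\widehat A^{ij}_{ts}]=0$. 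Thus $\widehat\bfX$ is the Stratonovich Brownian rough path above a Brownian motion with covariance $\diag(\gamma_1,\ldots,\gamma_d)$, and being the unique possible subsequential limit it is in fact the limit of the whole family. The main point to handle with care is the step identifying the limit as a Brownian motion on $G$ from continuity plus stationary independent increments — that is, ensuring that no jump component on the second level can coexist with the continuity of the first level — which rests on the uniform H\"older control of Lemmas \ref{lem.kolmogorovwk} and \ref{lem.kolmogorovRP} passing to the limit.
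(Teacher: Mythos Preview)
Your approach is essentially the paper's: establish that any subsequential limit is a continuous $G$-valued L\'evy process via stationarity and the exponential mixing of Proposition \ref{prop.exponentialdec}, then pin it down using the symmetry of Proposition \ref{prop.mixing} and the moment bounds of Lemmas \ref{lem.kolmogorovwk}--\ref{lem.kolmogorovRP}. The paper carries out the identification by an explicit generator comparison with the Stratonovich lift $\bfW$ via Hunt's theorem, whereas you argue structurally through the parameter space of left-invariant Brownian motions on $G$; both routes rest on the same three ingredients.

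There is one genuine gap in your identification step. You assert that once the level-one covariance is fixed, ``the only remaining freedom is a constant drift in the level-two direction.'' That is not true: a general left-invariant diffusion on $G$ may also carry an independent Brownian component in the centre $\mathfrak{so}(d)\subset\mathfrak g$ (and a priori cross-covariances with the first level). Such a vertical diffusion would make $\widehat{\mathbb E}\big[|\widehat A_{t0}|^2\big]$ grow linearly in $t$ for small $t$, whereas Lemma \ref{lem.kolmogorovRP} (passed to the limit) gives $\widehat{\mathbb E}\big[|\widehat A_{t0}|^2\big]\lesssim t^2$. So the ingredient you need is already in hand, but you must invoke it to exclude vertical diffusion, not merely vertical drift; once $a_{22}=0$ the positive semidefiniteness of the diffusion matrix forces the cross block $a_{12}$ to vanish as well, and only then is the centre drift the sole remaining parameter. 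Relatedly, your stated worry about a ``jump component on the second level'' is misplaced: convergence in $\mathsf{RP}(\gamma)$ already yields a limit that is continuous as a $G$-valued path, so there are no jumps to rule out --- the subtle point is the extra centre diffusion, and the paper's generator computation handles it through the same $t^2$ estimate in its terms $(1)$ and $(3)$.
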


\medskip

\begin{proof} 

\emph{G-valued Lévy process.}
As above, we first notice that any limit measure of the laws of $(\bfX^\sigma)_{\sigma>0}$ turns the canonical process on $\textsf{RP}(\gamma)$ into a random process with stationary independent increments, in the free nilpotent Lie group of step $2$, as a consequence of the corresponding property for $\bfX^\sigma$. The canonical process on the free nilpotent Lie group of step $2$ is thus a continuous Lévy process under any limit law, so, according to Hunt's theorem, we can identify the former from its generator. More specifically, such a process $Y$ is characterised by the action
\[ f \mapsto \lim_{t\to0}\frac1t\bbE_e[f(Y_t)-f(e)]\in\bbR \]
of its generator on smooth functions $f:G\to\bbR$ with compact support, where $e$ is the unit of $G$; see \cite[Theorems 5.3.3]{Applebaum} or \cite[Theorem 1.1]{Liao}.

\medskip

\emph{Generator.}
Let $\widehat{\mathbb P}$ be any limit point of the laws of $\bfX^\sigma$ on $\textsf{RP}(\gamma)$, and denote by $\mathbf X = (X,\mathbb X)$ its canonical variable. We know from Proposition \ref{prop.convergencewk} that $X$ is a Brownian motion $W$; denote by $\bfW=(W,\mathbb W)$ its canonical Stratonovich rough path lift, also defined on the space $(\textsf{RP}(\gamma),\widehat{\mathbb P})$. Since the velocity process $v = (v^1,\dots,v^d)$ and $(v^1,\dots, v^{i-1},-v^i,v^{i+1},\dots,v^d)$ have the same law for every $1\leq i\leq d$, for $v_0$ distributed according to the invariant measure $\mu$, the antisymmetric part $\mathbb{A}^{\!\bfX}_{ts} := \frac{1}{2}(\bbX_{ts}-{}^t\bbX_{ts})$ is centred for any $0\leq s\leq t\leq 1$. We also know from the uniform estimates proved in Lemmas \ref{lem.kolmogorovwk} and \ref{lem.kolmogorovRP} that
\begin{equation}
\label{eq.rappelmoments}
\widehat{\mathbb E}\big[|X_t|^2\big]\lesssim t, \quad \widehat{\mathbb E}\big[|\mathbb{A}^{\!\bfX}_{t0}|^2\big]\lesssim\widehat{\mathbb E}\big[|\mathbb{X}_{t0}|^2\big]\lesssim t^2, 
\end{equation}
uniformly in $t\in[0,1]$.

A last piece of notation. Since the set of antisymmetric matrices lies in the tangent space to the free nilpotent Lie group $G$ of step $2$, at any point ${\bf z}\in G$, any smooth real-valued function $f$ defined on $G$, with compact support, has a well-defined partial differential $\partial_{\mathbb{A}}f(\mathbf z)$ in the direction of antisymmetric matrices, defined by the identity
\[
\partial_{\mathbb{A}}f(\mathbf z)(\mathbb{A}) = {\frac{\dif}{\dif t}}_{|t=0}f\big(\mathbf z+t(0,0,\mathbb A)\big),
\]
for any ${\bf z}=(1,Z,\mathbb Z)\in G$ and any antisymmetric matrix $\mathbb{A}$. Setting $\overline{\mathbf z}:=\big(1,Z,\frac{1}{2}(\bbZ+{}^t\bbZ)\big)$, we further have
\[ 
\big| f(\mathbf z) - f(\overline{\mathbf z}) - (\partial_{\mathbb{A}}f)(\overline{\mathbf z})(\mathbb{A}^{\!\mathbf z}) \big| \lesssim_f  |\mathbb{A}^{\!\mathbf z}|^2,
\]
since $f$ has compact support. Denote by $e$ the unit of the group $G$. Denote by $\mathbb{A}^{\!\bfW}$ the antisymmetric part of $\mathbb{W}$ and set $\overline{\bfX}_t := \big(1, X_t, \frac{1}{2}X_t^{\otimes 2}\big)\in G$, so that $\bfX_t = \overline\bfX_t + (0,0,\bbA^{\!\bfX})$ and $\bfW_t = \overline\bfX_t + (0,0,\bbA^{\!\bfW})$. We have, for some fixed $f$ smooth with compact support,
\begin{align*}
\left|\frac1t\widehat{\mathbb E}\big[f(\bfX_t)\right. & \left.-f(e)\big] - \frac1t\widehat{\mathbb E}\big[{f(\bfW_t)-f(e)}\big]\right| \\
&=\ \frac1t \Big| \widehat{\mathbb E}\Big[ f\big(\overline{\mathbf X}_t+(0,0,\mathbb{A}^{\!\bfX}_t)\big) - f\big(\overline{\mathbf X}_t+(0,0,\mathbb{A}^{\!\bfW}_t)\big)\Big]\Big|   \\
&\lesssim_f \frac1t \Big| \widehat{\mathbb E}\Big[ \Big((\partial_{\mathbb{A}}f)(\overline{\bfX}_t) - (\partial_{\mathbb{A}}f)(e)\Big) (\mathbb{A}^{\!\bfX}_t - \mathbb{A}^{\!\bfW}_t)\Big]\Big| + \frac1t \Big| \widehat{\mathbb E}\big[(\partial_{\mathbb{A}}f)(e) (\mathbb{A}^{\!\bfX}_t - \mathbb{A}_t^{\!\bfW})\big] \Big|   \\
&\quad\quad +\frac{1}{t} \Big( \widehat{\mathbb E}\big[|\mathbb{A}^{\!\bfX}_t|^2\big] + \widehat{\mathbb E}\big[|\mathbb{A}_t^{\!\bfW}|^2\big]\Big)   \\
&\lesssim_f (1) + (2) + (3)\text.
\end{align*}
We show that each term vanishes as $t$ goes to $0$, which implies that the two Markov processes $\bfX$ and $\bfW$ have the same generator, hence the same distribution. We have first from estimates \eqref{eq.rappelmoments} the upper bound
\begin{align*}
(1) & \leq \frac1{2t} \widehat{\mathbb E}\Big[ \sqrt t\left\| (\partial_{\mathbb{A}}f)(\overline{\bfX}_t) - (\partial_{\mathbb{A}}f)(e) \right\|^2\Big]  +  \frac1{2t} \widehat{\mathbb E}\Big[\frac1{\sqrt t} \big|\mathbb{A}^{\!\bfX}_t - \mathbb{A}_t^{\!\bfW}\big|^2\Big]   \\
&\lesssim_f \frac{1}{\sqrt t} \widehat{\mathbb E}\big[|\overline{\bfX}_t - e|^2\big] +  \frac1{t\sqrt t} \widehat{\mathbb E}\Big[\big|\mathbb{A}^{\!\bfX}_t\big|^2\Big]  +  \frac1{t\sqrt t} \widehat{\mathbb E}\Big[\big|\mathbb{A}_t^{\!\bfW}\big|^2\Big]   \\
&\lesssim_f \sqrt t\text.
\end{align*}
We also have $(2)=0$, since $\mathbb{A}^{\!\bfX}_t$ and $\mathbb{A}_t^{\!\bfW}$ are centred and $\partial_{\bbA}f(e)$ is linear. Finally, we have $(3)\lesssim t$ from the upper bounds \eqref{eq.rappelmoments}. We thus have the upper bound
\[   
\left|\frac1t\widehat{\mathbb E}\big[{f(\bfX_t)-f(e)}\big] - \frac1t\widehat{\mathbb E}\big[{f(\bfW_t)-f(e)}\big]\right| \lesssim_f \sqrt t,
\]
from which the result follows.
\end{proof}

\bigskip

\subsection{From Euclidean space to Riemannian manifolds}
\label{SubsectionManifold}

Let $(\mathcal M,g)$ be a Riemannian manifold of dimension $d$, without boundary. We emphasised in the introduction that anisotropic Brownian motion describes the random motion of a non-point-like object, with its own notion of local orientation. Such an object is represented by a point in the orthonormal frame bundle $O\mathcal M$ of $\mathcal M$, where its dynamics is described by a stochastic differential equation. We refer to Hsu's book \cite{Hsu} for a reference textbook on stochastic differential geometry.

In this subsection, we use Einstein summation convention: indices appearing twice are implicitely summed.

\medskip

\subsubsection{The orthonormal frame bundle $O\mathcal M$ of $\mathcal M$}
\label{SubsectionOrthonormalBundle}

\smallskip

Denote by $z = (q,e)$ a generic point of the orthonormal frame bundle $O\mathcal M$ of $\mathcal M$, with $q\in\mathcal M$ and $e : \bbR^d\rightarrow T_q\mathcal M$, an orthonormal frame of $T_q\mathcal M$; we write $\pi : O\mathcal M\rightarrow \mathcal M$ for the canonical projection map. The Levi-Civita connection on $T\mathcal M$ induces a notion of horizontal vectors on $T\mathcal M$ or $O\mathcal M$. Let $\mathrm H$ stand for the horizontal lift operator, meaning the map $O\mathcal M\times\bbR^d\to TO\mathcal M$ uniquely characterised by the property that $\mathrm H_z(u)\in T_zO\mathcal M$ is horizontal and
\[
\dif\pi_z\big(\mathrm H_z(u)\big)=e(u),
\] 
for any $u\in\bbR^d$ and $z=(q,e)\in O\mathcal M$. Letting $\big(\epsilon_1,\dots,\epsilon_d\big)$ be the canonical basis of $\bbR^d$, local coordinates $q^i$ on $\mathcal M$ induce canonical coordinates on $O\mathcal M$ by writing
\[
e_i := e(\epsilon_i) = e_i^j\frac{\partial}{\partial q^j}.
\]
Denoting by $\Gamma^k_{ij}$ the Christoffel symbols of the Levi-Civita connection associated with the above coordinates, the vector fields $\mathrm H(u)$ have the following expression.
\begin{equation*}
\mathrm H_z(\epsilon_\alpha) = e_\alpha^i \frac{\partial}{\partial q^i} - \Gamma_{ij}^k(q)  e_\alpha^i e_l^j \frac{\partial}{\partial e_l^k}.
\end{equation*}

\medskip

\subsubsection{Cartan's development map and anisotropic kinetic Brownian motion}

\smallskip

Roughly speaking, Cartan's development map associates in its simplest form a $\mathcal C^1$ path in $\mathcal M$, started from $q_0\in\mathcal M$, to any $\mathcal C^1$ path in the Euclidean space $\bbR^d$. Technically, given a $\mathcal C^1$ path $(x_t)_{t\geq 0}$ in $\bbR^d$, and $z_0=(q_0,e_0)\in O\mathcal M$, the Cartan development of $(x_t)_{t\geq 0}$ on $\mathcal M$ is defined as the projection $(q_t)_{0\leq t<T}$ on $\mathcal M$ of the horizontal $O\mathcal M$-valued path $(z_t) =: (q_t,e_t)_{0\leq t<T}$ solution of the ordinary differential equation
\begin{equation}\label{eq.development}
\dif z_t = \mathrm{H}_{z_t}(\dif x_t),\quad\text { i.e. }\quad\dot z_t=\mathrm H_z(\dot x_t)
\end{equation}
started from $q_0$, possibly up to some explosion time $T$. Note that the choice of $x:t\mapsto tu$ for some $u\in\bbR^d$ leads to $q$ being a geodesic with initial condition $\dot q_0=e_0(u)$; in particular, the development of $X^\sigma$ tends to a geodesic with random initial condition as $\sigma\to0$.

Classical stochastic analysis (in the Stratonovich sense) can be used to make sense of the preceding equation for $x$ a semimartingale, defining Cartan's stochastic development --- refer to Hsu's book \cite{Hsu} for a pedagogical account of the theory. For example, one of the many equivalent constructions of Brownian motion on $\mathcal M$ started at $q_0$ consists in developing a standard Euclidean Brownian motion. Accordingly, we define anisotropic Brownian motion on $\mathcal M$ as the development of the Euclidean Brownian motion with covariance $\diag(\gamma_1,\cdots,\gamma_d)$.

Anisotropic kinetic Brownian motion $(q^\sigma_t)_{0\leq t<T}$ on $\mathcal M$ is the stochastic development of the anisotropic kinetic Brownian motion $(X^\sigma_t)_{t\geq 0}$ on $\bbR^d$; it is indexed by the speed parameter $\sigma$ of its flat counterpart. This is a $\mathcal C^1$ random path which depends on the entire frame $e_0$ --- its isotropic counterpart only depends \textit{in law} on $e_0$, from symmetry properties of Wiener measure on $\bbR^d$.  Although $X^\sigma$ converges weakly to an anisotropic Brownian motion $B$ on $\bbR^d$, the poor regularity properties of the It\^o solution map does not allow to conclude that anisotropic Brownian motion $x^\sigma$ on $\mathcal M$ converges to projection on $\mathcal M$ of the solution of the equation
\[
\dif z_t = \mathrm{H}(z_t)\,\circ\dif B_t.
\]
This is exactly the kind of conclusion that rough paths theory provides.

\medskip

\subsubsection{Rough paths and rough differential equations with values in manifolds}
\label{SubsectionRoughPaths}

\smallskip

We discuss a few results of rough paths theory with values in manifolds. These results are all classical, and their Euclidean counterparts can be found e.g. in \cite{FrizHairer} or \cite{FV10}. Let $\mathcal N$ be a manifold, and, for a collection $A=(A_1,\cdots,A_n)$ of smooth vector fields on $\mathcal N$ and an initial condition $p\in\mathcal N$, consider the (deterministic) controlled differential equation
\[
\dif z_t = A(z_t)\dif x_t\text,\quad z_0 = p
\]
on $\mathcal N$, where $x$ is a driving curve with values in $\bbR^n$. The equation makes sense whenever $x$ is of class $\mathcal C^1$ (dividing each side by $\dif t$, one might say), and if moreover $x$ is of class $\mathcal C^2$, its solution is characterised by the fact that for any fixed $t\geq0$ and $f:\mathcal N\to\bbR$ smooth with compact support,
\[ f(z_t) = f(z_s) + (A_if)(z_t)(x^i_t-x^i_s) + O(|t-s|^2) \]
as $s\to t$. Now if $\bfX=(X,\bbX)$ is a rough path of Hölder regularity $1/3<\gamma\leq1/2$, we consider the following notion of solution: a continuous path $z:[0,T)\to \mathcal N$ is a solution of the rough differential equation
\begin{equation}\label{eq.rde}
\dif z_t = A(z_t)\bfX_{\dif t}\,\text,\quad z_0 = p
\end{equation}
if one can find some $a>1$ such that any choice of $t\geq0$ and $f:\mathcal N\to\bbR$ smooth with compact support yields
\[ f(z_t) = f(z_s) + (A_if)(z_t)(X^i_t-X^i_s)
                   + (A_iA_jf)(z_t)\bbX^{ij}_{ts}
                   + O(|t-s|^a) \]
as $s\to t$. This point of view is taken from \cite{BailleulIHP,BailleulFlows}, in the mindset of \cite{Davie}. We say that $z$ explodes as $t\to T$ if $z$ leaves any compact set.

In a probabilistic mindset, the fundamental remark is that, for $\bfX$ the Stratonovich rough path lift of some standard Brownian motion $W$, such a solution coincides almost surely with the solution of the Stratonovich equation
\[ \dif z_t = A(z_t)\circ\dif W_t\text,\quad z_0=p\text. \]
It is a striking feature of rough paths theory that not only does \eqref{eq.rde} admit a unique solution $z$ for any (deterministic) rough path $\bfX$, in the above sense and up to some explosion time $T>0$, but also the Itô-Lyons map $\bfX\mapsto z$ is continuous in the following sense. Fix $d$ a Riemannian distance on $\mathcal N$. If $T'<T$ and $\eps>0$, there exists some $\delta>0$ such that for any $\bfX'$ at rough path distance at most $\delta$ from $\bfX$, the solution $z'$ of
\[ \dif z'_t = A(z'_t)\bfX'_{\dif t}\,\text,\quad z'_0=p \]
is defined on $[0,T']$ and satisfies $d(z_t,z'_t)<\eps$ for all $0\leq t\leq T'$.

This kind of continuity in enough to ensure convergence in distribution: namely, if $(\bfX^n)_{n\geq0}$ is a family of random rough paths converging weakly to $\bfX$ with respect to the rough path topology, then in a sense, the (random) solution $z^n$ of \eqref{eq.rde} driven by $\bfX^n$ converges to the solution of that driven by $\bfX$. Let us make that point precise. Denote by $\widehat{\mathcal N}$ the one point compactification of $\mathcal N$ ($\widehat{\mathcal N}=\mathcal N$ if $\mathcal N$ is compact) and set $C_p$ the space of continuous paths $z:[0,1]\to\widehat{\mathcal N}$ starting at $p$ such that $z_{t+\cdot}\equiv\infty$ whenever $z_t = \infty$. Fix $d$ a Riemannian metric on $\mathcal N$ such that $d(p,p')\to\infty$ as $p'\to\infty$, and define on $C_p$ the smallest topology containing, for any $\gamma\in C_p$ and $R,\eps>0$, the set of paths $z\in C_p$ satisfying
\[ \max_{\substack{t\geq 0\\d(p,\gamma_t)\leq R}}d(z_t,\gamma_t)<\eps\text. \]
The topology does not depend on $d$, and a sequence $z^n$ of curves in $C_p$ converges to $z^\infty$ if and only if for all $R$, the curves $z^n_{\cdot\wedge\tau_R}$ stopped when they get at distance $R$ of $p$ converge uniformly to $z^\infty_{\cdot\wedge\tau_R}$. We can now state what one might call a theorem of continuity in distribution, in the following form.

\begin{theorem}\label{thm.itodistribution}
For some fixed $1/3<\gamma\leq1/2$, let $(\bfX^n)_{n\geq0}$ be a sequence of random $\gamma$-rough paths with values in $\bbR^d$, whose distributions converge weakly to that $\bfX^\infty$. These processes might be defined on different probability spaces.

Then, for any $0\leq n\leq\infty$, there exists a unique random variable $z^n$ with values in $C_p$ such that it solves the rough differential equation
\[ \dif z^n_t = A(z^n_t)\bfX^n_{\dif t}\,\text,\quad z^n_0 = p \]
almost surely up to explosion, and the distributions of $z^n$ converge to that of $z^\infty$ with respect to the topology of $C_p$ described above.
\end{theorem}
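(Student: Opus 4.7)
The plan is to reduce the theorem to the deterministic continuity of the Itô-Lyons map on $\textsf{RP}(\gamma)$, by turning the weak convergence $\bfX^n \Rightarrow \bfX^\infty$ into almost sure convergence via Skorokhod's representation theorem. First, I would establish existence, uniqueness, and Borel measurability of the solution map $\mathcal S : \textsf{RP}(\gamma) \to C_p$. This comes from the classical well-posedness theory of manifold-valued RDEs (via local charts and a patching argument), which produces, for any deterministic $\bfX$, a unique maximal solution $\mathcal S(\bfX) \in C_p$ up to an explosion time $T(\bfX) \in (0, 1]$, with $\mathcal S(\bfX)_t \equiv \infty$ for $t \geq T(\bfX)$; the local continuity statement recalled just before the theorem will in fact yield that $\mathcal S$ is globally continuous, hence Borel measurable. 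Setting $z^n := \mathcal S(\bfX^n)$ then gives the existence and uniqueness claim. Next, since $\textsf{RP}(\gamma)$ is Polish, I would invoke Skorokhod's representation theorem to obtain random rough paths $\widetilde\bfX^n$ on a common probability space, each distributed as $\bfX^n$, with $\widetilde\bfX^n \to \widetilde\bfX^\infty$ almost surely in the rough path topology. It would then remain to prove the pathwise convergence $\mathcal S(\widetilde\bfX^n) \to \mathcal S(\widetilde\bfX^\infty)$ in $C_p$.

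For the pathwise convergence, I would fix an outcome where $\widetilde\bfX^n \to \widetilde\bfX^\infty$, write $\widetilde z^n := \mathcal S(\widetilde\bfX^n)$, and let $T > 0$ denote the explosion time of $\widetilde z^\infty$. Given $R > 0$, the assumption that $d(p, z') \to \infty$ as $z' \to \infty$ in $\widehat{\mathcal N}$ implies that $K_R := \{t \in [0, 1] : d(p, \widetilde z^\infty_t) \leq R\}$ is a compact subset of $[0, T)$, hence contained in some $[0, T']$ with $T' < T$. The local continuity statement would then give $\sup_{t \in [0, T']} d(\widetilde z^n_t, \widetilde z^\infty_t) < \eps$ for $n$ large enough, and in particular $\max_{t \in K_R} d(\widetilde z^n_t, \widetilde z^\infty_t) < \eps$. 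Since $R, \eps$ are arbitrary, this places $\widetilde z^n$ inside every basic neighborhood of $\widetilde z^\infty$, yielding $\widetilde z^n \to \widetilde z^\infty$ in the topology of $C_p$. Almost sure convergence in the Polish space $C_p$ then implies the weak convergence of $z^n$ to $z^\infty$, closing the argument.

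The hard part will not be the probabilistic passage from weak to almost sure convergence, which is automatic once continuity of $\mathcal S$ is in hand, but rather the deterministic setup: establishing well-posedness and continuity for manifold-valued RDEs. I would treat these as classical input from rough paths theory on manifolds (working in local charts, patching solutions across chart transitions, and appealing to the Euclidean continuity results of \cite{FrizHairer, FV10}). The reason the rest of the argument runs smoothly is that the topology on $C_p$ is precisely calibrated to the local-in-time continuity provided by rough paths: each basic neighborhood can be tested on a compact subset $K_R$ of $[0, T)$, making the local continuity statement immediately applicable without any delicate handling of the explosion time.
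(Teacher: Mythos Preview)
The paper does not give its own proof of this theorem: it is stated in Section~\ref{SubsectionRoughPaths} as a classical consequence of rough paths theory, with the surrounding discussion referring the reader to \cite{FrizHairer,FV10,BailleulFlows,BailleulIHP,Davie} for the underlying well-posedness and continuity results. Your proposal therefore cannot be compared to a proof in the paper, but it is the standard and correct way to establish such a statement: once the deterministic solution map $\mathcal S:\textsf{RP}(\gamma)\to C_p$ is shown to be continuous (which is exactly what the local continuity statement preceding the theorem, combined with the design of the $C_p$-topology, gives you), weak convergence follows by Skorokhod representation or, equivalently, the continuous mapping theorem.

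Two minor remarks. First, you do not need to assert that $C_p$ is Polish at the end; almost sure convergence implies convergence in distribution in any reasonable topological space, and in fact once you have continuity of $\mathcal S$ the continuous mapping theorem applies directly without passing through Skorokhod at all. Second, your handling of $K_R$ is fine, but note you only use that $K_R\subset[0,T']$ for some $T'<T$, not its compactness per se; this follows from the fact that the solution leaves every ball around $p$ as $t\uparrow T$, which is precisely the meaning of explosion together with the choice of $d$ so that $d(p,\cdot)\to\infty$ at infinity.
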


\medskip

\subsubsection{The interpolation result}

\smallskip

The proof of Theorem \ref{thm.manifold} then follows from the rough path convergence of the rough path lift $\bfX^\sigma$ of anisotropic kinetic Brownian motion $X^\sigma$ in $\bbR^d$, Theorem \ref{thm.convergenceRP}, and the continuity properties of the It\^o-Lyons solution map to rough differential equations. As in \cite{ABT}, one needs to use the stochastic and geodesic completeness of $(\mathcal M,g)$ to conclude that the convergence of the $O\mathcal M$-valued development of anisotropic kinetic Brownian in $\bbR^d$ in not only local, but that weak convergence holds true; see Proposition 2.4.3 and Lemma 2.4.4 in \cite{ABT}. Stochastic completeness refers here to the isotropic Brownian motion on $\mathcal M$. We implicitly use here the fact that for a complete and stochastically complete Riemannian manifold, the anisotropic Brownian motion on $\mathcal M$ is also stochastically complete.

\bigskip

\section{Going a bit farther}
\label{SectionFarther}

In this section, we take a step back, and see what remains of Theorem \ref{thm.manifold} in a higher level of generality. Suppose that $(v^\sigma_t)_{t\geq0}$ is of the form $v^\sigma_t = I(\overline v_{\sigma^2t})$, with $(\overline v_t)_{t\geq0}$ a càdlàg Markov process with values in some manifold $\mathcal W$ and $I:\mathcal W\to\bbR^d$ bounded continuous --- Theorem \ref{thm.manifold} deals with the case $I:\mathcal W=\mathbb S^{d-1}\hookrightarrow\bbR^d$ and $\overline v$ the anisotropic Brownian motion with time scale 1. Because the path $(x^\sigma_t)_{t\geq0}$ integrating the velocity is Lipschitz, its development on a Riemannian manifold is well-defined, and the objects described in Theorem \ref{thm.extension} make sense. We first restate and prove it, in the form of Theorem \ref{thm.final}, then discuss some examples in Section \ref{SubsectionExamples}.

\bigskip

\subsection{A more general theorem}
\label{SubsectionFinalTheorem}

This subsection is devoted to the proof of the following rewriting of Theorem \ref{thm.extension}.

\begin{theorem}\label{thm.final}
Let $(\mathcal M,g)$ be a Riemannian manifold of dimension $d$, and $(q^\sigma_t)_{t\geq0}$ a process on $\mathcal M$ whose velocity $\dot q^\sigma_t\in T_{q_t}\mathcal M$ has image $v^\sigma_t\in T_{q_0}\mathcal M\simeq\bbR^d$ under the inverse stochastic parallel transport along $q$. Suppose that, for some càdlàg Markov process $\overline v$ on a manifold $\mathcal W$, $(v^\sigma_t)_{t\geq0}$ is the continuous image of $(\overline v_{\sigma^2t})_{t\geq0}$, i.e. $v^\sigma_t=I(\overline v_{\sigma^2t})$ with $I:\mathcal W\to T_{q_0}\mathcal M$ bounded continuous. Suppose that $\overline v$ admits an invariant measure $\mu$ such that under $\mathbb P=\mathbb P_\mu$,
\begin{enumerate}
\item \label{con.mixing} equation \eqref{eq.mix} holds with $\mathcal F_{[a,b]}$ the $\sigma$-algebra generated by $\{I(\overline v_t)\}_{a\leq t<b}$;
\item \label{con.symm} for all $1\leq i\leq d$, the flippings $(v^1,\cdots,v^{i-1},-v^i,v^{i+1},\cdots,v^d)$ have the same distribution as $v=v^\sigma=(v^1,\cdots,v^d)$ for some, hence all, $\sigma>0$.
\end{enumerate}
Then as $\sigma\to\infty$, the time rescaled process $(q^\sigma_{\sigma^2t})_{t\in[0,1]}$ converges in law to an anisotropic Brownian motion on $\mathcal M$ with covariance $\diag(\gamma_1,\cdots,\gamma_d)$,
\[
\gamma_i := \int_0^\infty\bbE\left[I(\overline v_0)^iI(\overline v_t)^i\right]\dif t\text.
\]
\end{theorem}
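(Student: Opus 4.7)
The strategy is to revisit the proofs of Theorems \ref{thm.euclidean} and \ref{thm.manifold}, observing that they rest on exactly three ingredients: stationarity of $v^\sigma$ under $\mathbb P_\mu$, the mixing property \eqref{eq.mix}, and the coordinate reflection symmetry \eqref{eq.symm}. All three are in force here: stationarity, because $v^\sigma_t=I(\overline v_{\sigma^2 t})$ with $\overline v_0\sim\mu$ invariant, and the other two by direct assumption via conditions \eqref{con.mixing} and \eqref{con.symm}. The plan is therefore to reduce to the Euclidean case, re-run the tightness and limit identification arguments of Section \ref{SectionProof}, and transfer back to $\mathcal M$ via the continuity of the It\^o--Lyons map (Theorem \ref{thm.itodistribution}).

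\medskip

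\textbf{Euclidean reduction and tightness.} Define $x^\sigma_t := q_0 + \int_0^t v^\sigma_s\,\dif s \in T_{q_0}\mathcal M\simeq\bbR^d$. Since $I$ is bounded and $\overline v$ is c\`adl\`ag, $x^\sigma$ is Lipschitz with uniformly bounded derivative; its Cartan development on $\mathcal M$ is therefore well-defined and coincides with $q^\sigma$ by construction, while its canonical rough path lift $\bfX^\sigma=(X^\sigma,\bbX^\sigma)$ at the rescaled time $X^\sigma_t:=x^\sigma_{\sigma^2 t}$ makes perfect sense. The change of variable $u=\sigma^2 r$ yields
\[
X^\sigma_t - X^\sigma_s \,=\, \frac{1}{\sigma^2}\int_{\sigma^4 s}^{\sigma^4 t} I(\overline v_u)\,\dif u,
\]
which matches exactly the setup of Section \ref{SubsectionTightness}. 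Applying \eqref{con.mixing} to the bounded functionals $\{I^i(\overline v_t)\}_{t\geq 0}$ yields the analog of Proposition \ref{prop.mixingcoords}, and Lemma \ref{lem.readytouse}, Lemma \ref{lem.kolmogorovwk}, and Lemma \ref{lem.kolmogorovRP} then transpose verbatim, giving tightness of $\{\mathcal L(\bfX^\sigma)\}_{\sigma>0}$ in $\mathsf{RP}(\gamma)$ for every $1/3<\gamma<1/2$.

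\medskip

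\textbf{Limit identification and manifold transfer.} Any weak accumulation point is a stationary process with independent increments on the free $2$-step nilpotent Lie group, by the same decorrelation argument as in Proposition \ref{prop.convergencewk} and Theorem \ref{thm.convergenceRP}: a small buffer $\eps>0$ is inserted between consecutive increments, \eqref{eq.mix} is applied repeatedly to factor the expectation in the limit $\sigma\to\infty$, and $\eps$ is sent to zero afterwards. Hypothesis \eqref{con.symm} ensures that $\bbE_\mu[I^i(\overline v_t)]=0$, that the off-diagonal entries of the limit covariance vanish, and that the antisymmetric part $\bbA^{\bfX}$ is centred --- exactly the facts needed in Theorem \ref{thm.convergenceRP} to match the generator of the limit with that of the Brownian rough path with the claimed diagonal covariance, the diagonal entries being computed by dominated convergence (the integrand is dominated by a constant times $\E^{-u/\tau}$ via \eqref{con.mixing} applied to the centred variables $I^i(\overline v_u)$). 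Hunt's theorem then identifies the limit uniquely, and Theorem \ref{thm.itodistribution} applied to the horizontal vector fields $\mathrm H(\epsilon_1),\ldots,\mathrm H(\epsilon_d)$ on $O\mathcal M$ transfers this rough path convergence to the convergence in law of $(q^\sigma_{\sigma^2 t})_{t\in[0,1]}$, up to explosion; the geodesic and stochastic completeness of $(\mathcal M,g)$ rule out blow-up, exactly as in Section \ref{SubsectionManifold}.

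\medskip

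The main obstacle is methodological rather than substantive: one must carefully audit each step of Sections \ref{SectionMixingVelocity} and \ref{SectionProof} to ensure that no step secretly relied on finer properties of the spherical diffusion (reversibility, ellipticity, continuity of sample paths) beyond \eqref{eq.mix} and \eqref{eq.symm}. The only genuinely new subtlety is that $\overline v$ is now merely c\`adl\`ag rather than continuous, but since $x^\sigma$ remains Lipschitz with bounded derivative, both the canonical rough path lift and the Cartan development are unaffected, and the monotone class argument used in the proof of Proposition \ref{prop.exponentialdec} handles without change the passage from continuous to bounded measurable functionals in the mixing estimates.
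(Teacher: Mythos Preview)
Your proposal is correct and follows essentially the same approach as the paper: the paper's own proof is a one-paragraph review observing that tightness (via Lemma \ref{lem.readytouse}) needs only the mixing assumption, centredness of $I(\overline v_0)$ and boundedness of $I$, while the Brownian identification needs only the symmetry and the asymptotic independence of increments coming from \eqref{eq.mix}; you have simply spelled out these steps in more detail. One minor point: in your final paragraph there is no need to invoke the monotone class argument of Proposition \ref{prop.exponentialdec}, since here the mixing estimate \eqref{eq.mix} is taken as a hypothesis rather than derived from total-variation convergence.
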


\begin{remark}
\label{rem.symm}
Condition \eqref{con.symm} above is indeed necessary. Assuming only that $I(\overline v)$ is centred, the tightness result stated in Corollary \ref{cor.tightness} still holds, as well as the Brownian behaviour of the Euclidean path as shown in Proposition \ref{prop.convergencewk}; however, the limit rough path needs not be Brownian --- see example \ref{ex.spinningmotion} below. In particular, there is no reason for the manifold-valued result to hold. In the common `rolling without slipping' analogy used to described stochastic development of Brownian motion, one might think of the resulting non-Brownian effect as a force rotating the paper around the contact point, so that the path on the manifold may have a tendency to lean to one side.
\end{remark}

\begin{remark}
Throughout our study, we have worked at equilibrium, with $\mathbb P=\mathbb P_\mu$. Although it simplifies the proofs, it is merely a cosmetic concern in the case of kinetic Brownian motion. In fact, under the assumption \eqref{eq.exponentialconv}, Theorem \ref{thm.final} holds for any $\mathbb P_\lambda$: see Proposition \ref{prop.outofeq} below. For instance, it will be the case in examples \ref{ex.randomflight} and, to some extent, \ref{ex.langevin} below. It is not clear whether the result should hold without this additional property.
\end{remark}

To establish Theorem \ref{thm.final}, let us review the ingredients of the proof of Theorem \ref{thm.manifold}. The tightness results, more specifically Corollary \ref{cor.tightness}, are essentially a consequence of Lemma \ref{lem.readytouse}. It holds whenever \eqref{eq.mix} is satisfied (condition \eqref{con.mixing}), $I(\overline v_0)$ is centred (condition \eqref{con.symm}) and $I$ is bounded. On the other hand, the convergence towards Brownian motion relies, in addition, on the symmetry property (condition \eqref{con.symm}) and independence of the increments. Equation \eqref{eq.mix} ensures the latter, so that the proof of Theorem \ref{thm.final} is essentially that of Theorem \ref{thm.manifold}.

\medskip

\begin{proposition}\label{prop.outofeq}
Replace condition \eqref{con.mixing} in Theorem \ref{thm.final} by the following variant of \eqref{eq.exponentialconv}. There exists some mixing time $\tau>0$ such that for all $x\in\mathcal W$ and $t>0$,
\begin{equation}\label{eq.outofeq}
\|P_t^*\delta_x - \mu\|_\mathrm{TV} \leq f(x)\exp(-t/\tau)
\end{equation}
for some function $f:\mathcal W\to\bbR_+$ integrable with respect to $\mu$.

Then the conclusion also holds under $\mathbb P_\lambda$, for any probability measure $\lambda$ on $\mathcal W$ such that $\lambda(f):=\int f\dif\lambda<\infty$.
\end{proposition}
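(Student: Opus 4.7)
The plan is to reduce to the stationary case (Theorem \ref{thm.final}) via a coupling argument exploiting the quantitative convergence \eqref{eq.outofeq}.

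First, integrating \eqref{eq.outofeq} against $\lambda$ and using $P_t^* \mu = \mu$ together with the convexity of the total variation norm yields
\[
\|P_t^* \lambda - \mu\|_\mathrm{TV} \leq \lambda(f)\,\E^{-t/\tau},
\]
for any $\lambda$ with $\lambda(f) < \infty$.

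Next, I would fix a slowly diverging burn-in time $T_\sigma$, say $T_\sigma := \log \sigma$, and construct a coupling of two copies $(\overline v_t)_{t \geq 0}$ and $(\overline v'_t)_{t \geq 0}$ of the Markov process of respective laws $\mathbb P_\lambda$ and $\mathbb P_\mu$, by running them independently on $[0, T_\sigma]$, applying a maximal coupling between $\overline v_{T_\sigma}$ and $\overline v'_{T_\sigma}$, and continuing jointly after time $T_\sigma$ through the Markov property. On the event $\Omega_\sigma$ that the coupling succeeds, of probability at least $1 - \lambda(f)\,\E^{-T_\sigma/\tau} \to 1$, the trajectories coincide for all $t \geq T_\sigma$, hence so do the associated velocities.

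The main task is then to show that on $\Omega_\sigma$, the rough-path lifts $\bfX^\sigma, \bfX'^\sigma$ of the two corresponding position processes get close in the $\gamma$-Hölder rough-path distance, for any $\gamma < 1/2$. Writing the differences as integrals involving the initial window $[0, T_\sigma]$ only, the first-order contribution is uniformly of sup-size $O(T_\sigma/\sigma^2)$, while the iterated integral difference splits into a diagonal piece of size $O(T_\sigma^2/\sigma^4)$ and a cross piece that factors as a tensor product between the bounded integral of the velocity mismatch over $[0, T_\sigma]$ and the equilibrium integral of the velocity over $[T_\sigma, \sigma^4 t]$, whose $L^2$-size is $O(\sigma^2)$ by (the analogue of) Lemma \ref{lem.kolmogorovwk}. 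Interpolating these sup-norm bounds with the tight Hölder seminorms of $\bfX^\sigma$ and $\bfX'^\sigma$ themselves should give $d_\gamma(\bfX^\sigma, \bfX'^\sigma) \to 0$ in probability. Combined with the Brownian convergence of $\bfX'^\sigma$ from Theorem \ref{thm.final}, this yields the Euclidean convergence under $\mathbb P_\lambda$, and the manifold-valued statement then follows by Theorem \ref{thm.itodistribution}, exactly as in the proof of Theorem \ref{thm.final}.

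The main difficulty is this interpolation step: $T_\sigma$ has to tend to infinity for the coupling to succeed, yet slowly enough that the perturbative terms above remain negligible in a Hölder topology, where small-scale control is delicate. The exponential mixing in \eqref{eq.outofeq} provides ample margin---a logarithmic $T_\sigma$ should be enough---but calibrating this interpolation cleanly, and transferring the tightness estimates of Section \ref{SubsectionTightness} from $\mathbb P_\mu$ to $\mathbb P_\lambda$ (either through the same coupling or by a direct re-proof), is where care is needed.
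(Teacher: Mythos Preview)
Your coupling approach is sound and would go through, but the paper takes a cleaner route that sidesteps the interpolation you flag as the main difficulty. The paper first re-proves tightness under $\mathbb P_\lambda$ directly, by observing that \eqref{eq.outofeq} yields the pointwise estimate $|\bbE_{P_u^*\delta_x}[G]-\bbE_\mu[G]|\leq |G|_\infty f(x)\E^{-u/\tau}$ in place of \eqref{eq.Gdecorrelation}, from which the analogue of Proposition \ref{prop.exponentialdec} under $\mathbb P_\lambda$ follows; the moment bounds of Section \ref{SubsectionTightness} then carry over with only cosmetic changes for non-stationarity. For the identification of the limit, rather than coupling and controlling rough-path distances, the paper uses the translation $T_\eps\bfX=(X_{\eps+\cdot}-X_\eps,\bbX_{\eps+\cdot,\eps+\cdot})$: for any bounded continuous $F$ one has $|\bbE_\lambda[F(T_\eps\bfX^\sigma)]-\bbE_\mu[F(T_\eps\bfX^\sigma)]|\lesssim\lambda(f)\E^{-\sigma^4\eps/\tau}$, so any subsequential limit $\widehat{\mathbb P}_\lambda$ satisfies $\widehat\bbE_\lambda[F(T_\eps\bfX)]=\widehat\bbE_\mu[F(T_\eps\bfX)]$, and one then lets $\eps\to 0$ using only $\mathcal C^0$-continuity of $T_\eps$---the rough-path tightness having been secured separately.

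The advantage of the paper's argument is that it decouples tightness from identification: the limit is pinned down in a soft topology where the time shift is trivially continuous, and the rough-path topology enters only through tightness. Your approach trades this for a concrete pathwise coupling, which is perhaps more intuitive but forces you to control H\"older seminorms of the difference directly---workable, as you note, given the exponential margin, but more laborious.
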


\begin{proof}
It is enough to show the convergence of the Euclidean rough paths $(\bfX^\sigma)_{\sigma>0}$.

\smallskip

\emph{Tightness.} We claim that Proposition \ref{prop.exponentialdec} holds for $\bbE_\lambda$. Indeed, by the same arguments, we see that
\begin{equation}\label{eq.Goutofeq}
|\bbE_{P_u^*\delta_x}[G] - \bbE_\mu[G]| \leq |G|_\infty\,f(x)\,\E^{-u/\tau}
\end{equation}
holds in lieu of \eqref{eq.Gdecorrelation}. From this we deduce
\begin{align*}
       |\bbE_{P_{t-s}^*\delta_x}[G] - \bbE_{P_t^*\lambda}[G]|
& \leq |\bbE_{P_{t-s}^*\delta_x}[G] - \bbE_\mu[G]|
     + |\bbE_\mu[G] - \bbE_{P_t^*\lambda}[G]| \\
& \leq (f(x)+\lambda(f)\,\E^{-s/\tau})|G|_\infty\,\E^{-(t-s)/\tau} \text,
\end{align*}
which is enough for rest of the proof to hold. It is then an easy exercise to adapt the proof of Corollary \ref{prop.mixingcoords}, and from this point every idea leading to tightness is the same, even if some care must be given to non-stationarity in the actual computations, e.g. regarding equation \eqref{eq.kolmogorovsimple}.

\smallskip

\emph{Brownian limit.} Let $\widehat{\mathbb P}_\mu$ be the law of the Brownian rough path on $\mathsf{RP}(\gamma)$, and $\widehat{\mathbb P}_\lambda$ a limit point of the laws of $\mathbf X^\sigma$ under $\mathbb P_\lambda$. We only need to show that $\widehat{\mathbb P}_\lambda = \widehat{\mathbb P}_\mu$.

Define the translation operator $T_h$ on $\mathsf{RP}(\gamma)$ as
\[ T_h(Y,\mathbb Y) := (Y_{h+\cdot}-Y_h,\mathbb Y_{h+\cdot,h+\cdot})\text. \]
Now, for any continuous bounded map $F:\mathcal C([0,1],G)\to\bbR$ and $\eps>0$, equation \eqref{eq.Goutofeq} gives
\[
     \left|\bbE_\lambda[F(T_\eps\mathbf X^\sigma)] - \bbE_\mu[F(T_\eps\mathbf X^\sigma)] \right|
\leq \lambda(f)|F|_\infty\,\E^{-\sigma^4\eps/\tau} \text,
\]
which, taking limits along a proper subsequence, implies that
\[ \widehat\bbE_\lambda[F(T_\eps\bfX)] = \widehat\bbE_\mu[F(T_\eps\bfX)]\text. \]
But $T_\eps\mathbf X\to\mathbf X$ in $\mathcal C^0([0,1],G)$, so the above equation holds for $\eps=0$, and $\widehat{\mathbb P}_\lambda$ is the law of the announced anisotropic Brownian motion. Note that $T_\eps\mathbf X$ has no reason to converge to $\mathbf X$ in the rough path topology, so tightness had to be proved beforehand.
\end{proof}

\bigskip

\subsection{Examples}
\label{SubsectionExamples}

In this last section, we finally discuss some examples and counterexamples to the statement of Theorem \ref{thm.final}.

\smallskip

\subsubsection{Spinning motion}\label{ex.spinningmotion}
The first example illustrates what happens when the motion does not satisfy the symmetry condition \eqref{con.symm} in Theorem \ref{thm.final} above. Set $I:\mathcal W=\bbR/2\pi\bbZ\to\mathbb C\simeq\bbR^2$ the exponential $v\mapsto \E^{\I v}$, and define $\overline v$ as the spinning motion
\[ \dif\overline v_t = \dif t + \dif W_t\text,
   \quad\text{ i.e. }\quad
   \overline v_t = \overline v_0+t+W_t\ \mathrm{(mod\ }2\pi\mathrm), \]
where $W$ is a standard Brownian motion on $\bbR$. Its dynamics is of course very simple: it admits a unique invariant measure $\mu(\dif v)=\frac1{2\pi}\dif v$ and satisfies equation \eqref{eq.exponentialconv}, so all hypotheses but condition \eqref{con.symm} in Theorem \ref{thm.final} above are satisfied.

As mentioned in Remark \ref{rem.symm} above, the laws of $(X^\sigma)_{\sigma>0}$ do converge to that of a Brownian process. As of those of the lifts $(\mathbf X^\sigma)_{\sigma>0}$, however, some drift appears in the limit. Indeed, setting $\bbA^\sigma$ the antisymmetric part of $\bbX^\sigma$,
\[ (\bbA^\sigma_{t0})^{12}
 = \frac1{2\sigma^4}
   \int_0^{\sigma^4t}\int_0^s\sin(\overline v_s-\overline v_u)\dif s\dif u
 = \int_0^\infty\int_0^\infty\frac1{2\sigma^4}\Ind_{u+\tau\leq\sigma^4t}
   \sin(\overline v_{u+\tau}-\overline v_u)\dif u\dif \tau\text, \]
so we get
\[ \bbE\big[(\bbA^\sigma_{t0})^{12}\big]
 = \int_0^\infty\int_0^\infty\frac1{2\sigma^4}\Ind_{u+\tau\leq\sigma^4t}
   \sin(\tau)\,\E^{-\tau/2}\dif u\dif \tau
 = \frac12\int_0^\infty\left(t-\frac\tau{\sigma^4}\right)_+\sin(\tau)\,\E^{-\tau/2}\dif\tau \]
with $(\cdot)_+$ the positive part. The limit is a non-zero linear function of $t$, so the limit of the lifts cannot be Brownian.

\medskip

Such drift phenomena in the Lévy area have arisen and been studied in different works recently, particularly in the context of random walks. See e.g. the articles \cite{LS1,LS2} of Lopusanschi and Simon, and those of Ishiwata, Kawabi and Namba, \cite{IKN1,IKN2}.

\medskip

\subsubsection{Random flight}\label{ex.randomflight}
The case where $I:\mathcal W=\mathbb S^{d-1}\hookrightarrow\bbR^d$ and $\overline v$ is a pure jump process, with rate 1 and uniform measure, is the so-called random flight studied by Pinsky in \cite{Pinsky}, where it is called the isotropic transport process. In this case, the mixing property \eqref{eq.mix} is a consequence of the stronger statement \eqref{eq.exponentialconv} that the dynamics converges exponentially fast to equilibrium in total variation, in the same way we treated anisotropic Brownian motion. There are no complications in dealing with jumps.

Because the velocity is isotropic, the limit covariance $\diag(\gamma_1,\cdots,\gamma_d)$ is proportional to $\mathrm{Id}$. Setting $T$ the first jump time,
\[ \gamma_i = \frac2d\int_0^\infty\bbE[\overline v_0\cdot\overline v_t]\dif t
            = \frac2d\int_0^\infty\mathbb P(T\leq t)\dif t
            = \frac2d\text, \]
and we recover the result of \cite{Pinsky}.

\medskip

\subsubsection{Donsker invariance principle for random walks}
\label{ex.donskerindependent}
Another example, studied in \cite{Breuillard} by Breuillard, Friz and Huesmann, is that of random walks. If $(Y_k)_{k\geq0}$ is a sequence of independent \emph{bounded} random variables with values in $\bbR^d$, \emph{symmetric} in the sense that their common law is invariant with respect to the flippings as described in condition \eqref{con.symm} in Theorem \ref{thm.final} above, we can consider the piecewise linear processes $W^\sigma$ defined by
\[ W^\sigma:t=\frac{n+u}{\sigma^4}\mapsto\frac1{\sqrt n}\sum_{k<n}Y_k+uY_n\text,\quad n\in\bbN\text,\ u\in[0,1)\text. \]
Let us translate this dynamics in our framework. Set $\mathcal W = \bbR/\bbZ\times\bbR^d$, $I:(\alpha,y)\mapsto y$, and define the dynamics of $(\overline v_t)_{t\geq0}=(\alpha_t,y_t)_{t\geq0}$ as follows. Given initial conditions $(\alpha_0,y_0)\in[0,1)\times\bbR^d$, $\alpha$ grows continuously with rate 1, i.e. $\alpha_t = \alpha_0 + t\ \mathrm{(mod\ 1)}$, whereas $y$ stays constant on time intervals of length 1, then jumps independently of the past according to the law of $Y_k$, i.e. $y_t = Y_{\lfloor t-\alpha_0\rfloor} $ with the convention $Y_{-1}=y_0$. With initial condition $\delta_0\otimes\mathcal L(Y_0)$, we see that the law of $x^\sigma$ is exactly that of $W^\sigma$.

The process $\overline v$ is Markovian, although not Feller, and admits an invariant measure $\mathrm{Unif}(\bbR/\bbZ)\otimes\mathcal L(Y_0)$. Because it is not ergodic, there is no hope for equation \eqref{eq.exponentialconv} to hold. Maybe surprinsingly, even if $I$ kills the non-mixing coordinate, it is also false that condition \eqref{con.mixing} of Theorem \ref{thm.final} holds: in the case where $Y$ has no atoms, take $P$ to be the first jump time in $[0,1]$, and $F$ the first jump time in $[n,n+1]$. However, it is true for any $\mu_\alpha:=\delta_\alpha\otimes\mathcal L(Y_0)$, with constants independent of $\alpha$ --- indeed, it is obvious that for $t>1$ and any probability law $\lambda$ on $\bbR^d$,
\[
P_t^*(\delta_\alpha\otimes\lambda) = \mu_{\alpha+t} = P_t^*\mu_{\alpha}
\]
holds in lieu of \eqref{eq.exponentialconv}. Remarkably, nothing more than this is needed throughout the proof. It should be clear that Proposition \ref{prop.mixingcoords} holds for any $\mu_\alpha$, and that tightness follows in the same fashion. Independence of increments, as stated in Propositions \ref{prop.convergencewk} and Theorem \ref{thm.convergenceRP}, hides no difficulty either. It is true that one has to be careful about the limit variance in Proposition \ref{prop.convergencewk}, because the Markov property is used in a crucial way. In our case, for any $\alpha\in[0,1)$ and $\sigma>1$, we end up with
\begin{align*}
      \mathbb{E}_{\mu_\alpha}\big[\big((X^\sigma_1)^i\big)^2\big]
   &= \frac1{\sigma^4}\int_0^{\sigma^4}\int_0^{\sigma^4}\mathbb{E}_{\mu_\alpha}[y^i_sy^i_t]\,\dif s\,\dif t \\
   &= \sum_{n\geq-1}\frac1{\sigma^4}\int_0^{\sigma^4}\int_0^{\sigma^4}\Ind_{n+\alpha\leq s,t<n+1+\alpha}\bbE_{\mu_{\alpha}}[y^i_sy^i_t]\,\dif s\,\dif t   \\
   &= \bbE[|Y^i_0|^2]\cdot\frac{(1-\alpha)^2 + \lfloor\sigma^4+\alpha-1\rfloor + \{\sigma^4+\alpha-1\}^2}{\sigma^4}
\end{align*}
with $\{\cdot\}$ the fractional part. In the limit, the variance converges to $\bbE[|Y_0^i|^2]$, and the result of Theorem \ref{thm.manifold} holds with covariance $\bbE[Y_0Y_0^*]$, in accordance with \cite{Breuillard}.

\medskip

Surprisingly enough, the symmetry condition \eqref{con.symm} is not mandatory here: see \cite{Breuillard}. In particular, drift in the antisymmetric part of $\bbX^\sigma$ that does not vanish in the limit, as mentioned in example \ref{ex.spinningmotion} above, comes from additional structure: in \cite{LS1}, the hidden Markov chain; in \cite{LS2}, the underlying directed graph; etc.

Note that in the case of random walks, as a consequence of the work of Chevyrev, see \cite[Example 5.8]{Chevyrev}, convergence of $X^\sigma$ as stated in Proposition \ref{prop.convergencewk} in enough to ensure convergence of $\mathbf X^\sigma$ to some random rough path. It is not clear from this approach, however, that this limit is indeed Brownian.

\medskip

\subsubsection{Donsker invariance principle for Markov chains}
\label{ex.donskermarkov}
The reader may have noticed that in the above example \ref{ex.donskerindependent}, independence of the variables $(Y_k)_{k\geq0}$ is a bit much, and one could work with covariances vanishing exponentially fast. Suppose for instance that $(Y_k)_{k\geq0}$ is a time-homogeneous Markov chain with invariant measure $\mu$ with compact support, whose correlations decrease as $\E^{-k/\tau}$, $\tau>0$; namely, letting $Q$ be the transition kernel of $Y$,
\[ \|\delta_y\,Q^k - \mu\|_\mathrm{TV} \lesssim \E^{-k/\tau} \]
for all $y$ in the support of $\mu$. Then, setting $\mu_0:=\delta_0\otimes\mu$, we get, for any probability measure $\lambda$ with $\mathrm{Supp}\ \lambda\subset\mathrm{Supp}\ \mu$,
\[ \|P_t^*(\delta_0\otimes\lambda) - P_t^*\mu_\alpha\|_\mathrm{TV}
 = \left\|\int(\delta_y\,Q^{\lfloor t\rfloor} - \mu)\lambda(\dif y)\right\|_\mathrm{TV}
\lesssim \E^{-t/\tau}\text. \]
Again, this inequality can be substituted for equation \eqref{eq.exponentialconv} in the proof of Proposition \ref{prop.exponentialdec}, and under the same symmetry condition as above, the convergence result still holds true.

Examples of such Markov chains are any aperiodic irreducible finite state Markov chain; or any Markov chain with transition kernel $Q(y,\dif y')$ absolutely continuous with respect to some measure $\nu$, and such that $\frac{\dif Q(y,\cdot)}{\dif\nu}$ is bounded below by a positive constant $m>0$, uniformly in $y,y'$. Note however that the symmetry condition \eqref{con.symm} of Theorem \ref{thm.final} is a bit stronger than in the independent case, since we need the flippings to leave the law of the whole sequence invariant.

\medskip

\subsubsection{Time-dependent Brownian motion}\label{ex.timedependent}
The way we wrote our convergence theorems is ill-suited to treat time-dependent randomness. However, there are cases where randomness can be somewhat dissociated from the time dependence, and our methods do in fact yield interesting convergence results. In the present example, we set to recover, in the limit, the Brownian motion on a manifold $\mathcal M$ endowed with a time-dependent metric $g_t$, as introduced in \cite{ArnaudonThalmaier} by Arnaudon, Coulibaly and Thalmaier.

Such an approach has already been set up in \cite{Kuwada}, in a similar fashion as the random flight described in example \ref{ex.randomflight} above. The idea is to freeze the metric in small time intervals $[t_i,t_{t+1}]$, say of size $1/\sigma^4$, over which the movement $q$ is purely geodesic with respect to the metric $g_{t_i}$, the initial condition being chosen uniformly at $t_i$ on the unit $g_{t_i}$-sphere of the tangent space of $\mathcal M$ at $q_{t_i}$. Suitably renormalised, this process converges to the time-dependent Brownian motion described above. We introduce a similar random flight which lets the metric vary continuously, and may be considered more natural in this respect, then prove its convergence to time-dependent Brownian motion.

\medskip

We begin by describing time-dependent Brownian motion and its surroundings. Suppose $g_t$ is smooth, as a function on $\bbR_+\times T\mathcal M\otimes T\mathcal M$. Let $F\mathcal M$ be the frame bundle over $\mathcal M$, and choose a point $q_0\in\mathcal M$ together with a $g_0$-orthonormal frame $e_0$ of $T_{g_0}\mathcal M$. For a $\mathcal C^1$ path $(x_t)_{t\geq0}$ in $\bbR^d$, we define the time-dependent development of $x$ as the solution $(z_t)_{t\geq0}=(q_t,e_t)_{t\geq0}$ of the following equation, whose terms we describe below.
\begin{equation}\label{eq.Tdevelopment}
\dif z_t = \mathrm H_{t,z_t}(\dif X_t)
         - \frac12\frac{\partial g_t}{\partial t}(u_t\epsilon_i,u_t\epsilon_j)\mathrm V^{ij}_{z_t}\dif t,
\quad z_0 = (q_0,e_0).\end{equation}
We use Einstein notation. As in Section \ref{SubsectionOrthonormalBundle}, $(\epsilon_1,\cdots,\epsilon)$ is the canonical basis of $\bbR^d$, and the $\mathrm H_{t,z}\epsilon_i$, resp. $\mathrm V^{ij}_z$, are the canonical horizontal vector fields, resp. vertical vector fields. Note that because the metric $g$ is time-dependent, the associated horizontal vector fields $\mathrm H$ must depend on $t$ as well. In coordinates,
\[ \mathrm H_{t,z}(\epsilon_\alpha) = e_\alpha^i \frac{\partial}{\partial q^i} - \big(\Gamma_t(q)\big)_{ij}^k\, e_\alpha^i e_l^j \frac{\partial}{\partial e_l^k},\qquad
   \mathrm V^{ij}_z = e_j^k \frac\partial{\partial e_i^k}\text. \]
If we compare \eqref{eq.Tdevelopment} to \eqref{eq.development}, the added vertical fields are there to ensure that $e_t$ is at all times orthonormal for $g_t$. We refer to \cite{Coulibaly} for an insight about why this definition is a sensible choice.

In particular, the time-dependent geodesics are the solutions of the equation associated to $ x_t=tu $ for some fixed $u\in\bbR^d$, and the time-dependent Brownian motion is the solution driven by some standard Brownian motion $W$ in the Stratonovich sense, or, equivalently, by the standard Stratonovich rough path $\mathbf W$ in the rough sense.

Note that we did not discuss time-dependent rough differential equations in Section \ref{SubsectionRoughPaths}. In the case of an equation driven by a $\mathcal C^1$ control $x$, the standard technique is of course to consider $t\mapsto(t,x_t)$ as the control. The same trick works with rough paths: associated to any rough path $\mathbf Y=(Y,\mathbb Y)$ is a canonical lift $\widehat{\mathbf Y}$ of $t\mapsto(t,Y_t)$ compatible with $\mathbf Y$. The solution of time-dependent rough differential equations is then well-defined. In what follows, we will also use the fact that $\mathbf Y\mapsto\widehat{\mathbf Y}$ is continuous in the rough path topology, so $\widehat\bfX^\sigma\to\widehat\bfX$ weakly whenever $\bfX^\sigma\to\bfX$ weakly.

\medskip

We define a kind of interpolated random walk on $\mathcal M$ whose limit will be the Brownian motion described above. Fix $\sigma>0$, and define $W^\sigma$ successively on the intervals $[s,t]=[\frac n{\sigma^4},\frac{n+1}{\sigma^4}]$ as follows: $\xi^\sigma_n$ is chosen independently of all the rest according to the uniform measure on the unit $g_s$-sphere of $T_{W^\sigma_s}\mathcal M$, and $W^\sigma$ on $[s,t]$ is a time-dependent geodesic in the above sense, with initial condition $\dot W^\sigma_s = \sqrt d\,\xi^\sigma_n$.

As in the previous example, there is a direct equivalent of this dynamics in our framework. Set $\mathcal W=\bbR/\bbZ\times \mathbb S^{d-1}$ and $I:(\alpha,y)\hookrightarrow \sqrt dy$, following the same dynamics as in \ref{ex.donskerindependent}, with $Y_0$ uniformly distributed on $\mathbb S^{d-1}$. We choose the initial condition to be $\delta_0\otimes\mathrm{Unif}(\mathbb S^{d-1})$; for the same reasons as in example \ref{ex.donskerindependent} above, $(\mathbf X^\sigma)_{\sigma>0}$ converges to the Brownian rough path $\mathbf X$ with covariance $d\,\bbE[Y_0Y_0^*] = \mathrm{Id}$.

Everything described so far is essentially time-invariant --- the time-dependence appears when we use this family of rough paths to describe a motion on $\mathcal M$. Fix $q_0\in\mathcal M$, and $e_0$ a $g_0$-orthonormal frame of $T_{q_0}\mathcal M$. Define the solution $(z_t)_{t\geq0}=(q_t,e_t)_{t\geq0}$ (up to explosion) on the frame bundle $F\mathcal M$ of equation \eqref{eq.Tdevelopment} driven by $\mathbf X$, in the rough sense.

By definition, $q_t$ defined as above is the Brownian motion associated to the time-dependent metric $g_t$, as described in \cite{ArnaudonThalmaier}. If we set $z^\sigma=(q^\sigma,u^\sigma)$ the solution of the equation driven by $\mathbf X^\sigma$, we get instead $q^\sigma = W^\sigma$ in law. The convergence of $\mathbf X^\sigma$, together with the general theory of rough paths (see Theorem \ref{thm.itodistribution}), ensures that $q^\sigma$, hence $W^\sigma$, converges weakly to the time-dependent Brownian motion $q$.

\medskip

\subsubsection{Langevin Process}\label{ex.langevin}
We conclude with an example where the velocity $v$ has unbounded support. We consider the process with anisotropic Ornstein-Uhlenbeck velocity, i.e. satisfying
\[ \dif\overline v_t = - \overline v_t\dif t + \dif B_t \]
for $B$ an anisotropic Brownian motion of covariance $\Sigma$. In the isotropic case, it is a simple scalar example of the hypoelliptic Laplacian of Bismut; see \cite{Bismut}. The anisotropic case is also treated in \cite{BHVW}.

Here, $I:\mathcal W=\bbR^d\to\bbR^d$ is simply the identity, and hence does quite fit the hypotheses of Theorem \ref{thm.extension}. However, it is well known that $\overline v$ admits as an invariant measure $\mu=\mathcal N(0,\frac12\Sigma)$ the Gaussian distribution with covariance $\frac12\Sigma$. Using the coupling $B'_t=-B_t$, it is known, and not difficult to see, that
\[ \|P_t^*\delta_x-\mu\|_\mathrm{TV}\lesssim (1\vee|x|)\,\E^{-t}\text, \]
from whence, because $1\vee|x|$ is in $L^1(\mu)$, we derive Proposition \ref{prop.exponentialdec}; see Proposition \ref{prop.outofeq}.

In our proof, boundedness of the velocity is essentially used twice: for proving the decorrelation of coordinates in Proposition \ref{prop.mixingcoords}, and to show that the variance of the limit must be the limit of the variances in \ref{prop.convergencewk}. Because $\mu$ has moments of all order, the latter will add no difficulty --- in fact, any moment of order $>2$ would suffice. As for the former, it is a bit trickier. We use the following variation of Proposition \ref{prop.mixingcoords}.

\begin{proposition} Fix some $\eps>0$ and some positive integer $n\in\bbN^*$. There exists $\tau'=\tau'(\tau,n,\eps)>0$ such that under $\mathbb P=\mathbb P_\mu$, and for any indices $1\leq j_1,\cdots,j_n\leq d$ and times $s_1,\cdots,s_n\geq0$,
\[
\left| \bbE\big[v_{s_1}^{j_1}\cdots v_{s_1+\cdots+s_n}^{j_n}\big]\right| \lesssim |v^{j_1}_0|_{L^{n+\eps}}\cdots|v^{j_n}_0|_{L^{n+\eps}}\,\E^{-\Delta/{\tau'}}.
\]
\end{proposition}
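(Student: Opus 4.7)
The plan is to adapt the three-block decomposition of Proposition~\ref{prop.mixingcoords}, replacing the $L^\infty$ control of the factors $V_-, V_0, V_+$ by $L^p$ controls distributed via Hölder's inequality. With $k_0$ an index achieving $\Delta$, set $V_-=\prod_{k<k_0}v^{j_k}_{t_k}$, $V_0=v^{j_{k_0}}_{t_{k_0}}$, $V_+=\prod_{k>k_0}v^{j_k}_{t_k}$; note that $\bbE_\mu V_0=0$ since $\mu$ is centred. The crucial ingredient I need, in place of \eqref{eq.mix}, is an $L^p$--$L^q$ mixing estimate: for any $p,q>1$ with $1/p+1/q<1$, and $P\in L^p(\mcF_{[0,s]},\mu)$, $F\in L^q(\mcF_{[t,\infty)},\mu)$,
\begin{equation}\label{eq.LpLqmix}
\bigl|\bbE_\mu[PF]-\bbE_\mu[P]\bbE_\mu[F]\bigr|\lesssim_{p,q}|P|_{L^p}|F|_{L^q}\,\E^{-(t-s)/\tau_{p,q}},
\end{equation}
for some $\tau_{p,q}>0$.

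I would derive \eqref{eq.LpLqmix} as follows. By the Markov property, $\bbE_\mu[F\mid\mcF_{[0,s]}]=(P_{t-s}G)(v_s)$, where $P_u$ denotes the OU semigroup and $G(x):=\bbE_x[\Phi(v_\cdot)]$ represents $F=\Phi(v_{t+\cdot})$. Applying Hölder's inequality and using $|G|_{L^q}\leq|F|_{L^q}$ (by Jensen and stationarity), it suffices to show $|P_w(G-\bbE_\mu G)|_{L^{p'}}\lesssim\E^{-w/\tau_{p,q}}|G|_{L^q}$ with $1/p+1/p'=1$. Since $1/p+1/q<1$ forces $p'<q$, hence $q\geq 2$, the $L^2$ spectral gap of OU gives $|P_{w/2}(G-\bbE_\mu G)|_{L^2}\lesssim\E^{-w/(2\tau)}|G|_{L^q}$. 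Nelson's hypercontractivity $|P_{w/2}h|_{L^{p'}}\leq|h|_{L^2}$ holds as soon as $p'-1\leq\E^{w/\tau}$, i.e., for $w\geq w_0(p')$; composing $P_w=P_{w/2}P_{w/2}$ yields the required estimate for $w\geq w_0(p')$, while the bounded range $w\in[0,w_0]$ is covered by the contraction $|P_wh|_{L^{p'}}\leq|h|_{L^{p'}}\leq|h|_{L^q}$, since $\E^{-w/\tau_{p,q}}$ is bounded below on $[0,w_0]$.

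Granted \eqref{eq.LpLqmix}, I would then write
\begin{align*}
\bigl|\bbE_\mu[V_-V_0V_+]\bigr|\leq\bigl|\bbE_\mu[V_-V_0V_+]-\bbE_\mu[V_-]\bbE_\mu[V_0V_+]\bigr|+|\bbE_\mu V_-|\bigl|\bbE_\mu[V_0V_+]-\bbE_\mu[V_0]\bbE_\mu[V_+]\bigr|,
\end{align*}
and apply \eqref{eq.LpLqmix} to each covariance. For the first, choose $p=(n+\eps)/(k_0-1)$ and $q=(n+\eps)/(n-k_0+1)$; then $1/p+1/q=n/(n+\eps)<1$, and generalized Hölder together with stationarity gives $|V_-|_{L^p}\leq\prod_{k<k_0}|v^{j_k}_0|_{L^{n+\eps}}$ and $|V_0V_+|_{L^q}\leq\prod_{k\geq k_0}|v^{j_k}_0|_{L^{n+\eps}}$. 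For the second, take $p''=n+\eps$ and $q''=(n+\eps)/(n-k_0)$, so that $1/p''+1/q''<1$, yielding the analogous bound. Both time gaps $s_{k_0}$ and $s_{k_0+1}$ exceed $\Delta$, so one obtains the announced estimate with $\tau'$ chosen as the smallest $\tau_{p,q}$ among the finitely many relevant exponents for $k_0\in\llbracket 1,n-1\rrbracket$; the edge case $k_0=1$ (empty $V_-$) kills the first covariance and requires only the second estimate.

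The main obstacle is the $L^p$--$L^q$ mixing estimate \eqref{eq.LpLqmix}: the $L^\infty$ version of Proposition~\ref{prop.exponentialdec} is too weak to handle unbounded factors, and gaining exponential decay simultaneously with an $L^{p'}$ bound genuinely exploits the Gaussian structure of OU through Nelson's hypercontractivity. For a generic Markov velocity satisfying only an $L^2$ spectral gap without a log-Sobolev type inequality, this approach would need additional inputs.
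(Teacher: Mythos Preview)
Your strategy is correct in spirit but differs from the paper's. The paper normalises the blocks, setting $V_- := \prod_{k<k_0}v^{j_k}_{t_k}/|v^{j_k}_0|_{L^{n+\eps}}$ and similarly for $V_0,V_+$, and then performs a \emph{truncation}: each $V_*$ is split as $W_*+R_*$ with $W_*:=V_*\Ind_{|V_*|\leq M}$ and $M:=\E^{\eta\Delta}$ for small $\eta>0$. The $L^\infty$ mixing of Proposition~\ref{prop.exponentialdec} is applied directly to the bounded pieces $W_*$ (contribution $\lesssim M^2\E^{-\Delta/\tau}$), while the remainders $R_*$ are controlled by Markov's inequality and the $L^{n+\eps}$ moment bounds (contribution $\lesssim M^{-c}$ for some $c>0$); balancing via small $\eta$ gives the result. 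This route uses only total-variation mixing and moment finiteness, so it transfers verbatim to any velocity process meeting those two conditions. Your hypercontractivity route is tidier once the $L^p$--$L^q$ covariance inequality is in hand, but genuinely relies on the Gaussian log-Sobolev structure of the Ornstein--Uhlenbeck semigroup, as you correctly flag at the end.

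There is one slip to repair. The implication ``$1/p+1/q<1$ forces $p'<q$, hence $q\geq 2$'' is false (take $p=10$, $q=3/2$), and in your application $q=(n+\eps)/(n-k_0+1)<2$ whenever $k_0$ is small and $n\geq 3$, so the step bounding $|G-\bbE_\mu G|_{L^2}$ by $|G|_{L^q}$ breaks down there. The patch is routine: since $1/p+1/q<1$ forces $\max(p,q)>2$, either invoke the reversibility of the OU semigroup to swap the roles of $P$ and $F$ and reduce to $q\geq 2$; or, when $q<2$, insert a preliminary hypercontractive step $P_{w/3}:L^q\to L^2$ before applying the $L^2$ spectral gap and finish with $\|\cdot\|_{L^{p'}}\leq\|\cdot\|_{L^2}$ (valid because then $p'<q<2$). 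Either way the $L^p$--$L^q$ mixing estimate holds and the rest of your argument goes through unchanged.
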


We give only hints of the proof. In the spirit of the proof of Proposition \ref{prop.mixingcoords}, set
\[ V_- := \prod_{1\leq k<k_0}\left(v^{j_k}_{t_k}/|v^{j_k}_0|_{L^{n+\eps}}\right) \]
and similarly for $V_0$ and $V_+$. Write $V_*=W_*+R_*$ with $W_*:=V_*\Ind_{|V_*|\geq M}$; for $M=\exp(\eta\Delta)$ with $\eta>0$ small enough, the proof of Proposition \ref{prop.mixingcoords} applied to $W_*$, together with a careful handling of the remainder $R_*$, are enough to get to the above result. It automatically implies Lemma \ref{lem.readytouse}, since $\mu$ has moments of all order, hence the conclusion of Theorem \ref{thm.manifold}.

Note that the treatment of unboundedness is not specifically designed for the Langevin process, so it can be applied to the study of the random walk as well. Moreover, it is not necessary for all moments to exist: moments of order $\alpha>2/(1-2\gamma)$ are enough to ensure tightness in $\mathsf{RP}(\gamma)$. Indeed, our proof, enhanced by the above corollary, will hold with moments of order $2n>2/(1-2\gamma)$ for any positive integer $n$; but adding an easy truncation argument at the beginning of the proofs of Lemma \ref{lem.kolmogorovwk} and \ref{lem.kolmogorovRP} will strengthen the result to non even integral moments. In this respect, our moment assumption is a bit weaker than that of \cite{Breuillard} in the symmetrical case.

\bigskip
\bigskip

\newcommand{\etalchar}[1]{$^{#1}$}

\end{document}